\newtheorem{prop}{Proposition}[section]
\newtheorem{lemma}[prop]{Lemma}
\newtheorem{thm}[prop]{Theorem}
\theoremstyle{definition}
\theoremstyle{remark}
\numberwithin{equation}{section}
\begin{document}

\author{Hiroki Takahasi}

 \address{Department of Mathematics,
 Keio University, Yokohama,
 223-8522, JAPAN} 
 \email{hiroki@math.keio.ac.jp}

\subjclass[2020]{Primary 37D35; Secondary 37B10, 37D30}
\thanks{{\it Keywords}: periodic points;  
 measure of maximal entropy; the Dyck shift}

\date{}

\title[Distributions of periodic points]
 {Distributions of periodic points for the Dyck shift and the heterochaos baker maps}

 \maketitle

 \begin{abstract}
The heterochaos baker maps are piecewise affine maps on the square or the cube that  
are one of the simplest partially hyperbolic systems. 
 The Dyck shift is a well-known example of a subshift that has two fully supported ergodic measures of maximal entropy (MMEs). 
We show that the two ergodic MMEs of the Dyck shift  
are represented as asymptotic distributions of sets of periodic points of different multipliers. 
We transfer this result to the heterochaos baker maps, and show that their two ergodic MMEs are represented as asymptotic distributions of sets of periodic points of different unstable dimensions.

      \end{abstract}

\section{Introduction}
Let $X$ be a topological space and let
 $T\colon X\to X$ be a Borel map. 
 For $n\in\mathbb N$,
  elements of the set
 ${\rm Per}_n(T)=\{x\in X\colon T^nx=x\}$ are called {\it periodic points of period $n$} of $T$.  
 When $X$ is a differentiable manifold, 
 we say $x\in {\rm Per}_n(T)$ is {\it hyperbolic} if $T^n$ is differentiable on a neighborhood of $x$ and all the eigenvalues of the derivative $DT^n(x)$ lie outside of the unit circle. 
 Infinitely many hyperbolic periodic orbits are embedded in chaotic dynamical systems, and they can be used as a spine to structure the dynamics.
 A hyperbolic periodic point $x\in {\rm Per}_n(T)$ is said to be {\it $k$-unstable} $(1\leq k\leq \dim X)$ if the number of the eigenvalues of $DT^n(x)$ counted with multiplicity that lie outside of the unit circle is $k$.
 If $x\in {\rm Per}_n(T)$ is $k$-unstable, $k$ is called the {\it unstable dimension} of $x$.

 Let $M(X)$ denote the space of Borel probability measures on $X$ endowed with the weak* topology and 
 let $M(X,T)$ denote the subspace of $M(X)$ that consists of $T$-invariant elements. For each $\mu\in M(X,T)$, let $h(\mu,T)\in[0,\infty]$ 
 denote the measure-theoretic  
entropy of $\mu$ with respect to $T$. 
If $\sup\{h(\mu,T)\colon\mu\in M(X,T)\}$ is finite,
measures that attain this supremum are called {\it measures of maximal entropy} (MMEs).
In the thermodynamic formalism \cite{Rue04},
the non-uniqueness of MME is interpreted as phase transitions. One can advance one's knowledge on phase transitions by analyzing phenomena associated with the non-uniqueness of MME.

For each $n\in\mathbb N$ with ${\rm Per}_n(T)\neq\emptyset$, consider the probability measure
 \[\mu_{T,n}=\frac{\sum_{x\in {\rm Per}_{n}(T)}\delta_x}{\#{\rm Per}_{n}(T) },\]
 where $\delta_x\in M(X)$ denotes the unit point mass at $x\in X$.  
 If $T$ is a transitive uniformly hyperbolic (Axiom A) diffeomorphism,
  the unstable dimension of periodic points of $T$ is constant, and  $\{\mu_{T,n}\}$ converges to the unique MME \cite{Bow71}. 
  In this paper we establish an analogue of this convergence for some simple 
  partially hyperbolic systems 
  for which MMEs are not unique.
  
  For partially hyperbolic diffeomorphisms, periodic points with different unstable dimensions can coexist densely in the same transitive set \cite{ABCDW,AS70,BD96,Man78,Shu71,Si72}.
Further, MMEs need not be unique \cite{BFT23,NMRV,RT22,RRTU12}. 
Then a natural question is which periodic points are to be used to represent each of the coexisting MMEs.
We would like to shed some light on this naive question by analyzing simple systems,
 called the {\it heterochaos baker maps}, introduced in \cite{STY21} and later in \cite{TY23} in a slightly more general form.  They are piecewise affine maps on the square or the cube, not a diffeomorphism, but retain some features of general partially hyperbolic diffeomorphisms. Below we introduce these maps, and state a main result.
\subsection{Distributions of periodic points for the heterochaos baker maps}\label{hetero-b-sec} 

Let $M\geq2$ be an integer.
To define the hetrochaos baker maps $f_a\colon[0,1]^2\to[0,1]^2$ and $f_{a,b}\colon[0,1]^3\to[0,1]^3$, where parameters $a$, $b$ range over the interval $(0,\frac{1}{M})$,
we write $(x_u,x_c)$ and $(x_u, x_c, x_s)$ for the coordinates on $[0,1]^2$ and $[0,1]^3$ respectively.
Define $\tau_a\colon[0,1]\to[0,1]$ by 
\[\tau_a(x_u)=\begin{cases}\vspace{1mm}
\displaystyle{\frac{x_u-(k-1)a}{a}}&\text{ on }[(k-1)a,ka),\ k=1,\ldots,M,\\ \displaystyle{\frac{x_u-Ma}{1-Ma}}&\text{ on }
[Ma,1].
\end{cases}\]
We introduce two alphabets consisting of $M$ symbols
\[D_\alpha=\{\alpha_1,\ldots,\alpha_M\}\ \text{ and }\ D_\beta=\{\beta_1,\ldots,\beta_M\},\] and set
 $D=D_\alpha\cup D_\beta.$ 
 For each $\gamma\in D$ we define a domain $\Omega_\gamma^+$ in $[0,1]^2$ by
\[\Omega_{\alpha_k}^+=\left[(k-1)a,ka\right)\times
\left[0,1\right]\ \text{ for }k=1,\ldots,M,\]
and
\[\Omega_{\beta_k}^+=\begin{cases}
\vspace{1mm}\displaystyle{\left[Ma,1\right]\times
\left[\frac{k-1}{M },\frac{k }{M }\right)}&
\text{ for }k=1,\ldots,M-1,\\
\displaystyle{\left[Ma,1\right]\times
\left[\frac{k-1 }{M},1\right]}&\text{ for }k=M.
\end{cases}\]
Define $f_{a}\colon [0,1]^2\to[0,1]^2$ by
\[\begin{split}
  f_a(x_u,x_c)=
  \begin{cases}
\displaystyle{\left(\tau_a(x_u),\frac{x_c}{M}+\frac{k-1}{M}\right)}&\text{ on }\Omega_{\alpha_k}^+,\ k=1,\ldots,M,\\
   \displaystyle{\left (\tau_a(x_u),Mx_c-k+1\right)}&\text{ on }\Omega_{\beta_k}^+,\ 
   k=1,\ldots,M.
   \end{cases}
\end{split}\]
\begin{figure}
\begin{center}
\includegraphics
[height=4cm,width=11cm]
{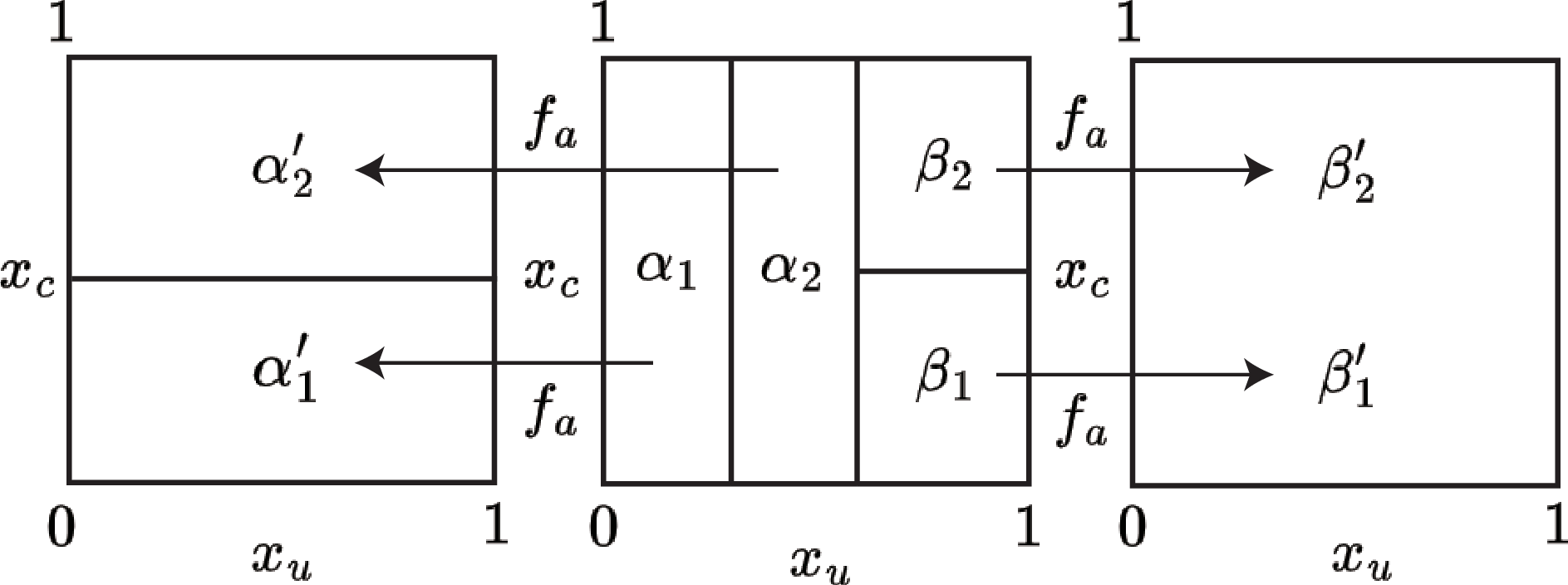}
\caption
{The map $f_{a}$ with $M=2$. For each $\gamma\in D$,
the domain $\Omega_\gamma^+$  and its image are labeled with $\gamma$ and $\gamma'$ respectively: $f_a(\Omega_{\beta_1}^+)=[0,1]\times[0,1)$ and $f_a(\Omega_{\beta_2}^+)=[0,1]^2$.}\label{fig1}
\end{center}
\end{figure}
\begin{figure}[b]
\begin{center}
\includegraphics
[height=4.4cm,width=12cm]
{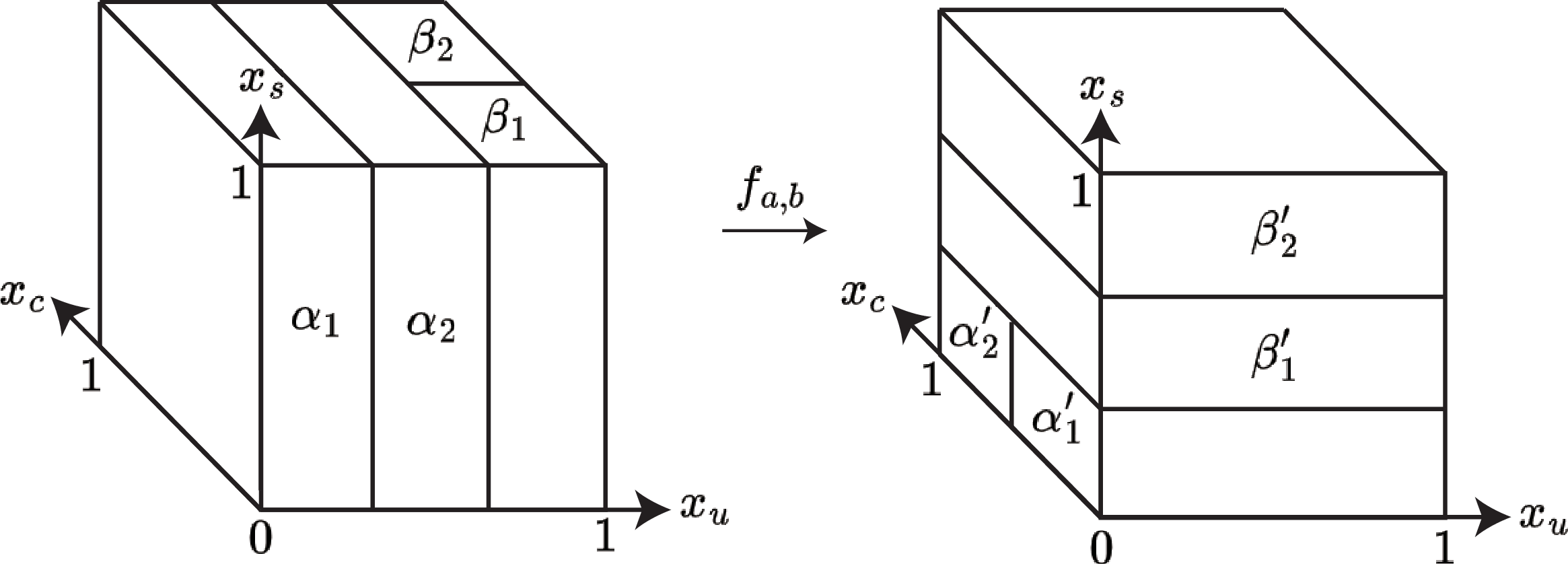}
\caption
{The map $f_{a,b}$ with $M=2$. For each $\gamma\in D$,
the domain $\Omega_\gamma$  and its image are labeled with $\gamma$ and $\gamma'$ respectively. }\label{fig2}
\end{center}
\end{figure}
Next, put $\Omega_{\alpha_k}=\Omega_{\alpha_k}^+\times\left[0,1\right]$ and $\Omega_{\beta_k}=\Omega_{\beta_k}^+\times\left[0,1\right]$
 for $k=1,\ldots,M$. Define $f_{a,b}\colon [0,1]^3\to[0,1]^3$ by 
\[\begin{split}
  f_{a,b}(x_u,x_c,x_s)=
  \begin{cases}
    \displaystyle{\left(f_{a}(x_u,x_c),
    (1-Mb)x_s\right)}&\text{ on }\Omega_{\alpha_k},\ k=1,\ldots,M,\\
   \displaystyle{\left (f_{a}(x_u,x_c),bx_s+1+b(k-M-1)\right)}&\text{ on }\Omega_{\beta_k},\ k=1,\ldots,M.
   \end{cases}
\end{split}\]
See \textsc{Figures}~\ref{fig1} and \ref{fig2} for the case $M=2$.  
Under the forward iteration of $f=f_{a,b}$,
the $x_u$-direction is expanding by factor $\frac{1}{a}$ or $\frac{1}{1-Ma}$ and the $x_s$-direction is contracting by factor $1-Mb$ or $b$. The $x_c$-direction is a center: contracting by factor $\frac{1}{M}$ on $\bigcup_{k=1}^M\Omega_{\alpha_k}$ and expanding by factor $M$ on $\bigcup_{k=1}^M\Omega_{\beta_k}$.
The map $f_a$ is the projection of $ f_{a,b}$ to the $(x_u,x_c)$-plane. 

Let ${\rm int}(\cdot)$ denote the interior operation in $\mathbb R^3$. For $f=f_{a,b}$ let $\Lambda=\Lambda_{a,b}$ denote the maximal $f$-invariant set given by
\begin{equation}\label{maximal-def}\Lambda=\bigcap_{n=-\infty}^\infty f^{-n}\left(\bigcup_{\gamma\in D}{\rm int}(\Omega_\gamma)\right).\end{equation}
We consider periodic points of $f|_\Lambda\colon\Lambda\to\Lambda$.
For each $n\in\mathbb N$ let ${\rm Per}_{\alpha,n}(f)$ (resp. ${\rm Per}_{\beta,n}(f)$) denote the set of $1$-unstable 
(resp. $2$-unstable) periodic points of $f|_\Lambda$ of period $n$, which are finite sets.
We exclude from further consideration periodic points of $f|_\Lambda$ that are not hyperbolic. The set of such periodic points contains continua parallel to the $x_c$-axis. 

Any heterochaos baker map $f\colon[0,1]^3\to[0,1]^3$ has the following properties:
see \cite[Theorem~1.1]{STY21} for (i); see \cite[Theorem~2.3]{STYY} and \cite{TY23}  for (ii).

 \begin{itemize}\item[(i)]Both $\bigcup_{n\in\mathbb N}{\rm Per}_{\alpha,n}(f)$ and $\bigcup_{n\in\mathbb N}{\rm Per}_{\beta,n}(f)$ are dense in $[0,1]^3$. This is the reason why $f$ is called `heterochaos'.

 \item[(ii)] 
There exist exactly two
 ergodic MMEs of entropy $\log (M+1)$, denoted by $\mu_{\alpha}$ and $\mu_{\beta}$. They are  Bernoulli, 
 charge any non-empty open subset of $[0,1]^3$ and
 satisfy
\[\mu_{\alpha}(\Omega_{\alpha_k})=
\mu_{\beta}(\Omega_{\beta_k})=\frac{1}{M+1}\ \text{ for  }k=1,\ldots,M.\]
\end{itemize} 
In \cite{STY21,TY23}, (i) (ii) were proved under some restrictions on $(a,b)$. Actually these restrictions can be removed, see \cite{STYY}.

\begin{thm}\label{theorema}
Let $f\colon[0,1]^3\to[0,1]^3$ be a heterochaos baker map.
 For any continuous function $\varphi\colon[0,1]^3\to\mathbb R$ we have
\[\lim_{n\to\infty}\frac{\sum_{x\in{\rm Per}_{\alpha,n}(f)}\varphi(x)}{\#{\rm Per}_{\alpha,n}(f)}=\int\varphi{\rm d}\mu_{\alpha}\ \text{ and }  \ \lim_{n\to\infty}\frac{\sum_{x\in{\rm Per}_{\beta,n}(f)}\varphi(x)}{\#{\rm Per}_{\beta,n}(f)}=\int\varphi{\rm d}\mu_{\beta},\]
and
\[\lim_{n\to\infty}\frac{\sum_{x\in{\rm Per}_{\alpha,n}(f)\cup {\rm Per}_{\beta,n}(f)  }\varphi(x)}{\#({\rm Per}_{\alpha,n}(f)\cup {\rm Per}_{\beta,n}(f) )}=\frac{1}{2}\int\varphi{\rm d}\mu_{\alpha}+\frac{1}{2}\int\varphi{\rm d}\mu_{\beta}.\]
\end{thm}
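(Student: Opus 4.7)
The plan is to reduce the theorem to an analogous equidistribution statement for periodic points of the two-sided Dyck shift $X_D$ on the alphabet $D$, and then transfer via a symbolic coding. Using the cells $\{\Omega_\gamma\}_{\gamma\in D}$, one constructs a continuous coding map $\pi\colon X_D\to\Lambda$ satisfying $\pi\circ\sigma=f\circ\pi$; admissibility in $X_D$ is exactly the Dyck rule, which mirrors the geometric constraint on itineraries dictated by how $f$ maps the $\Omega_\gamma$. Since $\Lambda$ is defined as the intersection of preimages of the interiors $\mathrm{int}(\Omega_\gamma)$, the map $\pi$ is bijective onto $\Lambda$. By the mass conditions on the cells in (ii), the push-forwards of the two ergodic MMEs of $X_D$ are exactly $\mu_\alpha$ and $\mu_\beta$.

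The key structural observation is that the unstable dimension of a hyperbolic periodic point $x=\pi(\xi)\in\Lambda$ of period $n$ is read off from the symbolic data: if $\xi$ uses $n_\alpha$ symbols from $D_\alpha$ and $n_\beta = n - n_\alpha$ symbols from $D_\beta$ per period, then the derivative $Df^n(x)$ is diagonal with the $x_u$-eigenvalue having modulus $>1$ and the $x_s$-eigenvalue having modulus $<1$ (both uniformly), while the $x_c$-eigenvalue equals $M^{n_\beta-n_\alpha}$. Hence $x$ is hyperbolic iff $n_\alpha\neq n_\beta$, and moreover
\[x\in{\rm Per}_{\alpha,n}(f)\iff n_\alpha>n_\beta,\qquad x\in{\rm Per}_{\beta,n}(f)\iff n_\alpha<n_\beta.\]
The excluded diagonal case $n_\alpha=n_\beta$ is precisely the center-neutral continua mentioned in the paragraph preceding the theorem.

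Assuming the Dyck shift analog---namely, that the empirical distributions on $\sigma$-periodic sequences of period $n$ with $n_\alpha > n_\beta$ (resp.\ $n_\alpha < n_\beta$) converge in the weak${}^*$ topology to the two ergodic MMEs of $X_D$---the first two limits of the theorem follow by push-forward via the continuous map $\pi$, using the correspondence above. For the third limit, the involution $\alpha_k\leftrightarrow\beta_k$ on $D$ combined with time-reversal induces a conjugacy of $X_D$ that interchanges $\{n_\alpha>n_\beta\}$-type and $\{n_\alpha<n_\beta\}$-type periodic sequences of any fixed period $n$. This gives $\#{\rm Per}_{\alpha,n}(f)=\#{\rm Per}_{\beta,n}(f)$ exactly, so the combined empirical measure is the arithmetic mean of the two individual ones, and the third limit follows.

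The main obstacle is the Dyck shift analog itself: the Dyck shift fails the specification property and has two distinct ergodic MMEs, so the classical Bowen--Parry approach to periodic-orbit equidistribution does not apply directly, and a refined counting argument exploiting the monoid structure and the split of periodic words into the two types is required. Within the transfer step, the only residual subtlety---possible non-bijectivity of $\pi$ on cell boundaries---is moot because $\Lambda$ is defined by the interiors of the $\Omega_\gamma$, so the correspondence between hyperbolic $f$-periodic orbits in $\Lambda$ and $\sigma$-periodic sequences in $X_D$ is clean.
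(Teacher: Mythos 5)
Your reduction to the Dyck shift and your identification of unstable dimension with the sign of $n_\alpha-n_\beta$ (i.e.\ with $H_n$) match the paper, as does the symmetry argument for the third limit. But there is a genuine gap in the transfer step: you assert that the coding $\pi$ is a bijection between $\Lambda$ and the Dyck shift and that the only possible failure of injectivity is on cell boundaries, hence ``moot.'' Both claims are false, and the paper says so explicitly (``The coding map $\pi\colon\Lambda\to\Sigma_D$ \dots is not injective''). The failure has nothing to do with boundaries: it comes from the center direction. An itinerary determines $x_u$ (via forward expansion) and $x_s$ (via backward expansion), but it determines $x_c$ only when the itinerary has unbounded net center expansion in forward or backward time; over balanced itineraries the fiber of $\pi$ contains a whole interval in the $x_c$-direction --- these are exactly the ``continua parallel to the $x_c$-axis'' of non-hyperbolic periodic points mentioned before the theorem. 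Moreover $\pi(\Lambda)$ is only \emph{dense} in $\Sigma_D$ (equation \eqref{relation-dyck} is about the closure), so your map ``$\pi\colon X_D\to\Lambda$'' is not even defined on all of $X_D$. Without injectivity on the relevant sets you cannot push the weak$^*$ convergence of the symbolic empirical measures forward to $[0,1]^3$.

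The paper repairs this with Lemma~\ref{factor-prop}: on the set $A_{\alpha,\beta}$ where $\liminf H_i=-\infty$ in one time direction, the fiber $\pi^{-1}(\omega)$ is a singleton, so a continuous section $\rho$ of $\pi$ exists there; both the empirical measures $\nu_{\gamma,n}$ and the limit $\nu_\gamma$ give full mass to $A_\gamma\subset A_{\alpha,\beta}$ (the former because $H_n\neq0$ forces periodic sequences into $A_\gamma$, the latter by ergodicity and Lemma~\ref{trichotomy}), and one still needs the extra observation that weak$^*$ convergence on $\Sigma_D$ together with full mass on the dense subset $A_\gamma$ yields weak$^*$ convergence on $A_\gamma$ before applying $\rho$. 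Your proposal contains none of this. Separately, you leave the Dyck shift equidistribution (Theorem~\ref{theoremb}) entirely as an assumption and suggest it would follow from a ``refined counting argument''; the paper instead proves it via Krieger's Borel embeddings of full shifts and Kifer's large deviations upper bound, precisely because Bowen's counting method fails ($\psi_\gamma^{-1}({\rm Per}_{\gamma,n}(\sigma))$ is not a separated set). Since the statement at hand is the baker-map theorem, the decisive defect is the unjustified bijectivity claim in the transfer.
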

Theorem~\ref{theorema} settles \cite[Conjecture~2.5]{STYY} in the affirmative.
Since the two ergodic MMEs of $f_{a,b}$ project to that of $f_a$, and there is a one-to-one correspondence between periodic points of $f_{a,b}$ in $\Lambda_{a,b}$ and that of $f_a$ in the projection of $\Lambda_{a,b}$, 
a statement analogous to Theorem~\ref{theorema} holds for $f_a$.

\begin{figure}[b]
 \begin{minipage}[b]
 {0.32\linewidth}
{\includegraphics[width=0.9\textwidth]{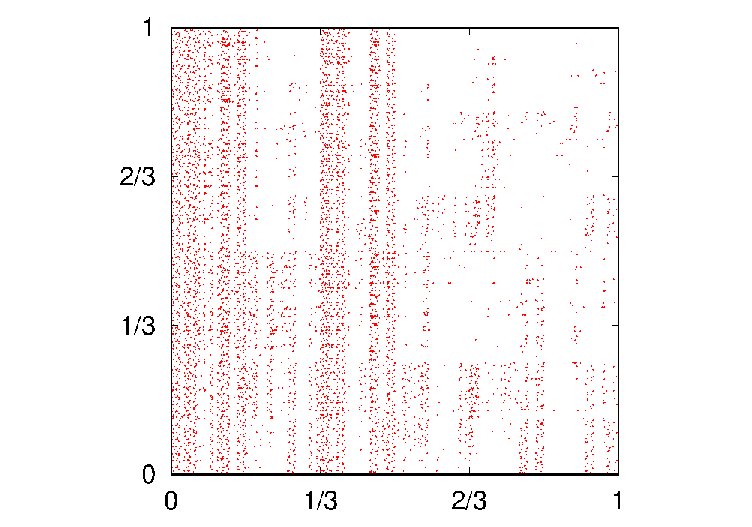}}
 {\small  period 11, 1-unstable}
\end{minipage}
\begin{minipage}[b]
{0.32\linewidth}
{\includegraphics[width=0.9\textwidth]{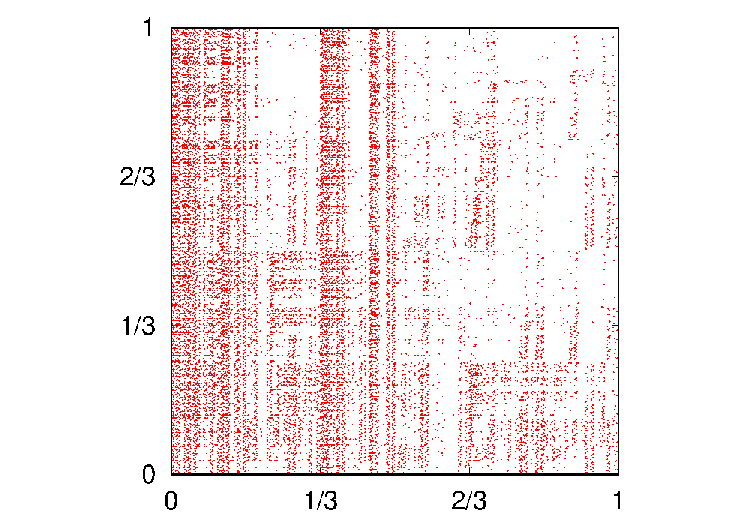}}
{\small period 12, 1-unstable}
\end{minipage}
\begin{minipage}[b]
{0.32\linewidth}
{\includegraphics[width=0.9\textwidth]{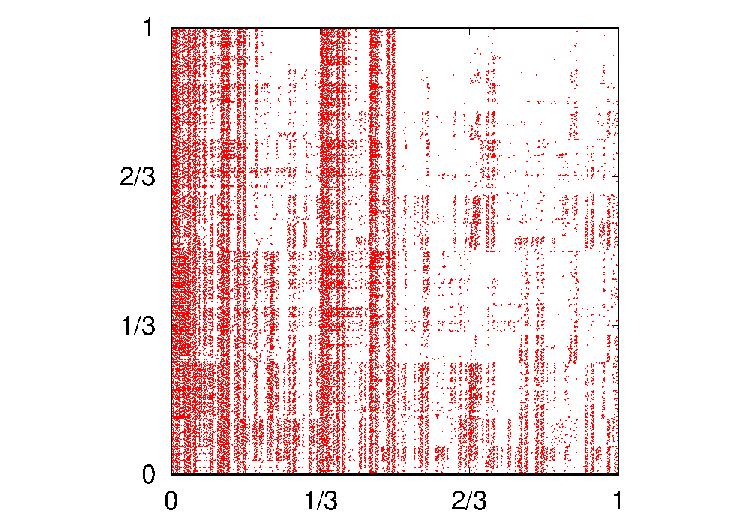}}
{\small period 13, 1-unstable}
\end{minipage}
\begin{minipage}[b]
{0.32\linewidth}
{\includegraphics[width=0.9\textwidth]{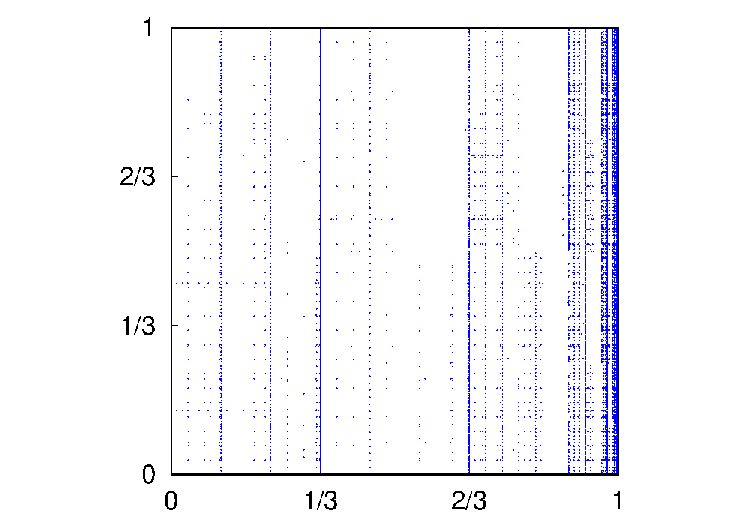}}
{\small period 11, 2-unstable}
\end{minipage}
 \begin{minipage}[b]
 {0.32\linewidth}
{\includegraphics[width=0.9\textwidth]{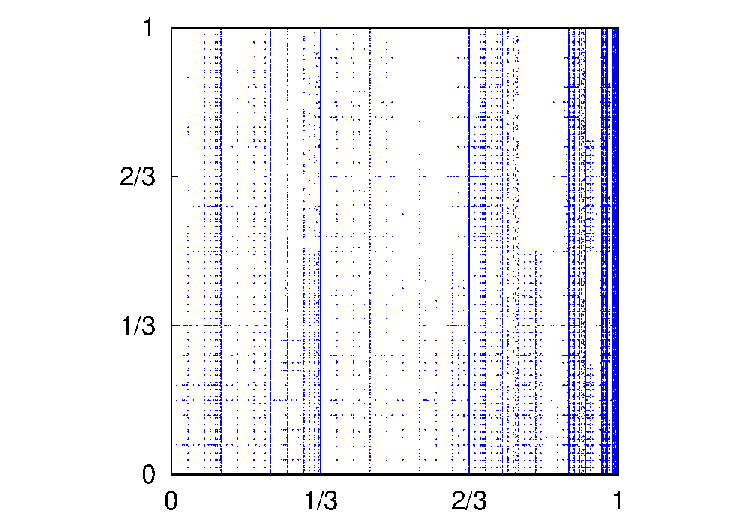}}
{\small period 12, 2-unstable}
\end{minipage}
\begin{minipage}[b]
{0.32\linewidth}
{\includegraphics[width=0.90\textwidth]{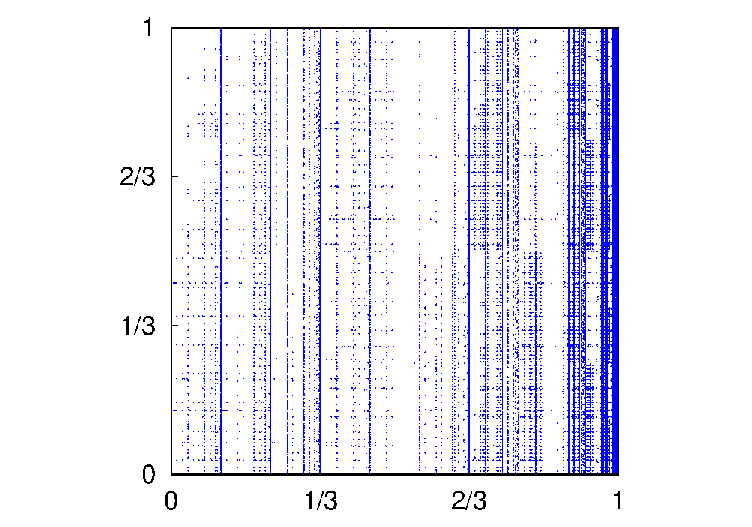}}
{\small period 13, 2-unstable}
\end{minipage}
\begin{minipage}[b]
 {0.32\linewidth}
{\includegraphics[width=0.9\textwidth]
{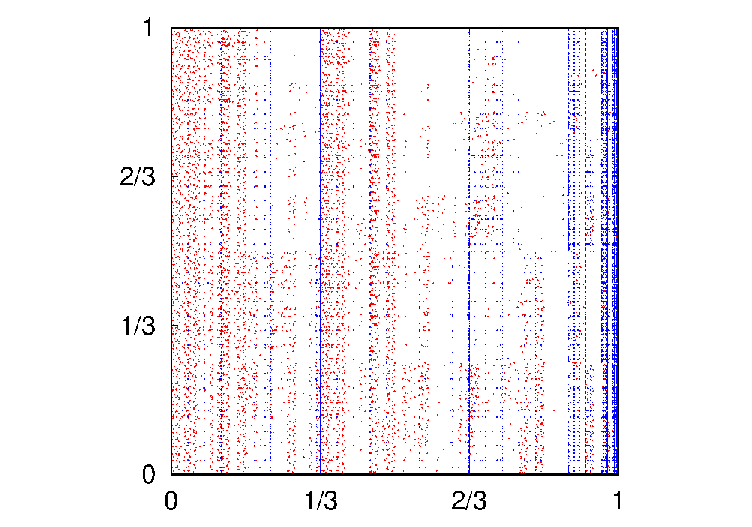}}
 {\small  period 11, 1$\&$2-unstable}
\end{minipage}
\begin{minipage}[b]
{0.32\linewidth}
{\includegraphics[width=0.9\textwidth]{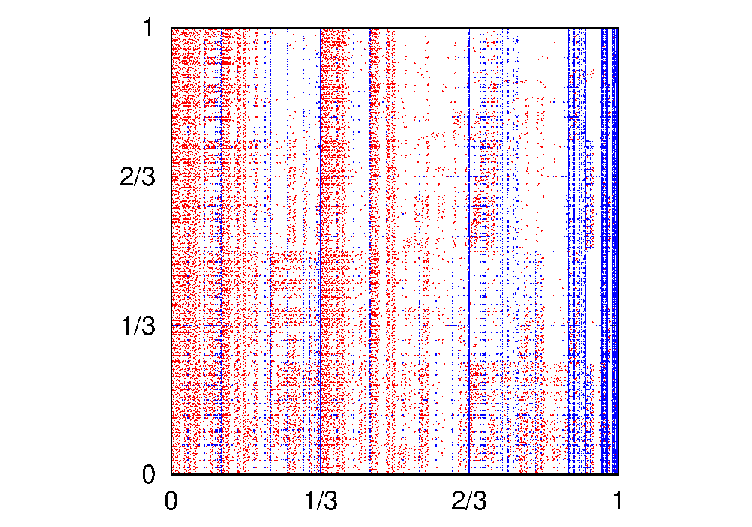}}
{\small period 12, 1$\&$2-unstable}
\end{minipage}
\begin{minipage}[b]
{0.32\linewidth}
{\includegraphics[width=0.9\textwidth]{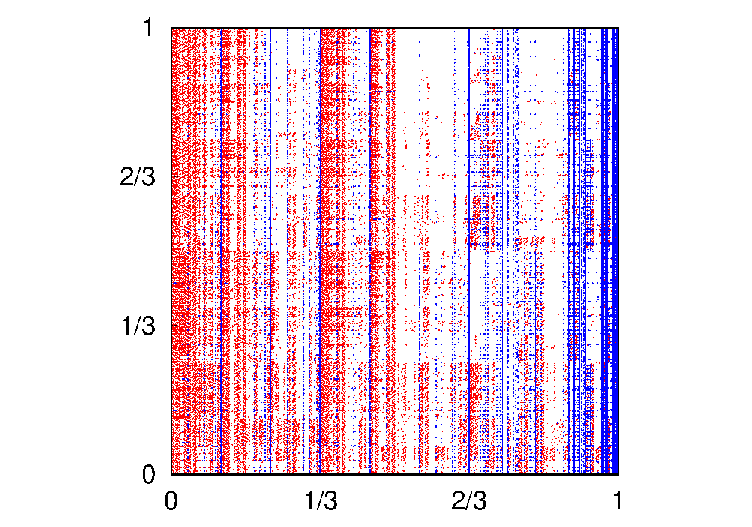}}
{\small period 13, 1$\&$2-unstable}
\end{minipage}
\caption{Part of periodic points of $f_{\frac{1}{3}}$ with $M=2$.
The first row shows $1$-unstable periodic points, the second row shows $2$-unstable periodic points,
and
the third row shows both of them.
}
\label{fig:p-orbits}
\end{figure}
\textsc{Figure}~\ref{fig:p-orbits} taken from \cite{STYY} shows partial plots of periodic points of $f_{\frac{1}{3}}$ with $M=2$ of period $11$, $12$, $13$ numerically computed by Yoshitaka Saiki. By \cite[Theorem~2.3(d)]{STYY}, the ergodic MME of $f_{\frac{1}{3}}$ obtained as the projection of
 $\mu_{\alpha}$ is the Lebesgue measure on $[0,1]^2$. So, 
$1$-unstable periodic points of $f_{\frac{1}{3}}$ are distributed according to the Lebesgue measure on $[0,1]^2$
as their periods tend to infinity. 
$2$-unstable periodic points are distributed according to the projection of $\mu_\beta$  that is singular with respect to the Lebesgue measure on $[0,1]^2$.



We hope that Theorem~\ref{theorema} sheds some light on distributions of periodic points for systems for which MMEs are not unique.
In the smooth category, 
most of such examples are partially hyperbolic systems \cite{BFT23,RT22,RRTU12}. In \cite{BFT23,RRTU12}, coexisting MMEs do not appear explicitly but appear in abstract dichotomy theorems. In \cite{RT22}, two ergodic MMEs on $\mathbb T^4$ were constructed but it is not clear how they are represented by periodic points.


\subsection{Distributions of periodic points for the Dyck shift}\label{dist-Dyck}
In order to prove Theorem~\ref{theorema}, we code  points in the maximal invariant set $\Lambda$ in \eqref{maximal-def} into sequences in the Cartesian product topological space $D^{\mathbb Z}$.
Define a {\it coding map} 
  $\pi\colon x\in \Lambda\mapsto
 (\omega_n)_{n\in\mathbb Z}\in D^{\mathbb Z}$ by
  \begin{equation}\label{code-def}x\in\bigcap_{n=-\infty }^\infty f^{-n}({\rm int}
  (\Omega_{\omega_n})).\end{equation}
  Let $\sigma$ denote the left shift acting on the subshift $\overline{\pi(\Lambda)}$: $(\sigma\omega)_n=\omega_{n+1}$ for all $n\in\mathbb Z$. 
  The coding map $\pi$ is a semiconjugacy between $f|_{\Lambda}$ and $\sigma$. 
We analyze asymptotic distributions of periodic points in the subshift $\overline{\pi(\Lambda)}$, and pull this result back to $f|_\Lambda$ to deduce Theorem~\ref{theorema}.
The subshift $\overline{\pi(\Lambda)}$ is independent of $(a,b)$ and 
   was identified in \cite{TY23} as explained below.

Let $D^*$ denote the set of finite words in $D$.
Consider the monoid with zero, with $2M$ generators in $D$ with relations 
\[\alpha_i\cdot\beta_j=\delta_{ij},\
0\cdot 0=0\ \text{ for } i,j\in\{1,\ldots,M\},\] \[\gamma\cdot 1= 1\cdot\gamma=\gamma,\
 \gamma\cdot 0=0\cdot\gamma=0\ 
\text{ for }\gamma\in D^*\cup\{ 1\},\]
where $\delta_{ij}$ denotes Kronecker's delta.
For $n\in\mathbb N$ and $\gamma_1\cdots\gamma_n\in D^*$ 
let
\[{\rm red}(\gamma_1\cdots\gamma_n)=\prod_{i=1}^n\gamma_i.\]
The subshift
\[
\Sigma_{D}=\{\omega=(\omega_i)_{i\in \mathbb Z}\in D^{\mathbb Z}\colon {\rm red}(\omega_j\cdots \omega_k)\neq0\ \text{ for all }j,k\in\mathbb Z\text{ with }j<k\}.\]
 is called {\it the Dyck shift} \cite{Kri74}. If we interpret $D$ 
as a collection of $M$ brackets, $\alpha_k$ left and $\beta_k$ right in pair, then  
$\Sigma_D$ is the subshift whose admissible words are words of legally aligned brackets.
It was proved in \cite[Theorem~1.1]{TY23} that 
\begin{equation}\label{relation-dyck}\overline{\pi(\Lambda)}=\Sigma_{D} \ \text{ for all } a, b\in \left(0,\frac{1}{M}\right).\end{equation}

Krieger \cite{Kri74} proved that
the Dyck shift has exactly two ergodic MMEs. By transferring them to $\Lambda$ 
we have obtained in \cite{TY23} the two ergodic MMEs $\mu_\alpha$, $\mu_\beta$ for the heterochaos baker map $f$. We set
\[\nu_\alpha=\mu_\alpha\circ\pi^{-1}\ \text{ and }\ \nu_\beta=\mu_\beta\circ\pi^{-1}.\] They are the two ergodic MMEs for the Dyck shift \cite{TY23}.

In order to represent $\nu_\alpha$ and $\nu_\beta$ 
by periodic points,
for each $n\in\mathbb N$ define a function $H_n\colon \Sigma_D\to\mathbb Z$ by
      \begin{equation}\label{hn-def}H_n(\omega)=\sum_{j=0}^{n-1} \sum_{k=1}^{M}(\delta_{{\alpha_k},\omega_j}-\delta_{\beta_k,\omega_j}).\end{equation}
      Note that $H_n(\omega)$ equals
      the difference of the number of symbols in $D_\alpha$ and that in $D_\beta$ in the sequence $\omega_0\omega_1\cdots \omega_{n-1}$.
We decompose ${\rm Per}_{n}(\sigma)$ into the following three subsets:
\[\begin{split}{\rm Per}_{0,n}(\sigma)&=\{\omega\in {\rm Per}_n(\sigma) \colon H_n(\omega)=0\};\\
{\rm Per}_{\alpha,n}(\sigma)&=\{\omega\in {\rm Per}_n(\sigma) \colon H_n(\omega)>0\};\\
{\rm Per}_{\beta,n}(\sigma)&=\{\omega\in {\rm Per}_n(\sigma)\colon H_n(\omega)<0\}.\end{split}\]
We exclude from further consideration all the periodic points in $\bigcup_{n\in\mathbb N}{\rm Per}_{0,n}(\sigma)$.
A zeta function defined by these periodic points was considered in \cite{Kel91}.
In \cite{HI05}, periodic points in $\bigcup_{n\in\mathbb N}{\rm Per}_{\alpha,n}(\sigma)$ (resp. $\bigcup_{n\in\mathbb N}{\rm Per}_{\beta,n}(\sigma)$) are said to have negative (resp. positive) multipliers.

By virtue of the connection \eqref{relation-dyck} between the heterochaos baker maps and the Dyck shift,  Theorem~\ref{theorema} follows from the next theorem on the Dyck shift.
 \begin{thm}\label{theoremb}
For any continuous function $\phi\colon\Sigma_D\to\mathbb R$ we have
\[\lim_{n\to\infty}\frac{\sum_{\omega\in{\rm Per}_{\alpha,n}(\sigma)}\phi(\omega)}{\#{\rm Per}_{\alpha,n}(\sigma)}=\int\phi{\rm d}\nu_{\alpha}\ \text{ and }\
\lim_{n\to\infty}\frac{\sum_{\omega\in{\rm Per}_{\beta,n}(\sigma)}\phi(\omega)}{\#{\rm Per}_{\beta,n}(\sigma)}=\int\phi{\rm d}\nu_{\beta},\]
and
\[\lim_{n\to\infty}\frac{\sum_{\omega\in{\rm Per}_{\alpha,n}(\sigma)\cup {\rm Per}_{\beta,n}(\sigma)  }\phi(\omega)}{\#({\rm Per}_{\alpha,n}(f)\cup {\rm Per}_{\beta,n}(f) )}=\frac{1}{2}\int\phi{\rm d}\nu_{\alpha}+\frac{1}{2}\int\phi{\rm d}\nu_{\beta}.\]
\end{thm}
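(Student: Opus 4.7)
My plan is to reduce Theorem~\ref{theoremb} to a cylinder-set counting problem and then extract the asymptotics from the combinatorial structure of admissible Dyck words. By density of locally constant functions in $C(\Sigma_D)$, it suffices to prove, for every admissible finite word $u$, that
\[\frac{\#({\rm Per}_{\alpha,n}(\sigma)\cap[u])}{\#{\rm Per}_{\alpha,n}(\sigma)}\longrightarrow \nu_\alpha([u]),\]
and the same with $\alpha$ replaced by $\beta$. The third limit then follows from these two by the bracket-reversal involution $\iota\colon\omega\mapsto(\overline{\omega_{-j}})_{j\in\mathbb Z}$ that swaps $\alpha_k\leftrightarrow\beta_k$: $\iota$ is a homeomorphism of $\Sigma_D$ conjugating $\sigma$ to $\sigma^{-1}$, exchanging ${\rm Per}_{\alpha,n}(\sigma)$ with ${\rm Per}_{\beta,n}(\sigma)$ (since $H_n\circ\iota=-H_n$ on $n$-periodic points) and pushing $\nu_\alpha$ to $\nu_\beta$. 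In particular $\#{\rm Per}_{\alpha,n}(\sigma)=\#{\rm Per}_{\beta,n}(\sigma)$, so the averaged empirical measure in the third limit is the equal convex combination claimed.

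The combinatorial heart is an explicit description of $\omega\in{\rm Per}_{\alpha,n}(\sigma)$. Writing $w=\omega_0\cdots\omega_{n-1}$, admissibility reduces $w$ in the Dyck monoid to some $\beta_{i_1}\cdots\beta_{i_p}\alpha_{j_q}\cdots\alpha_{j_1}$ with $q-p=H_n(\omega)>0$, and admissibility of the bi-infinite repetition $w^{\mathbb Z}$ further forces the cyclic matching $j_r=i_r$ for $1\leq r\leq p$. Each such periodic point thus decouples into a colored lattice-path skeleton of $n$ steps ending at height $H_n(\omega)$, together with a cyclic choice of colors for the $p$ matched pairs, while the remaining $q-p$ suffix $\alpha$'s stay on the stack forever. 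This presents $\#({\rm Per}_{\alpha,n}(\sigma)\cap[u])$ as a colored generalized ballot count.

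I would extract asymptotics by either a direct generating-function / saddle-point analysis for colored Motzkin-type paths (using the reflection/cycle lemma), or, more conceptually, by lifting $\Sigma_D$ to the countable-state topological Markov shift $\hat\Sigma_D$ whose vertices encode stack states, noting that the two positive-recurrent irreducible components of $\hat\Sigma_D$ carry lifts of $\nu_\alpha$ and $\nu_\beta$ respectively, and then invoking the classical equidistribution of periodic orbits for positive-recurrent countable Markov shifts on each component separately before projecting back to $\Sigma_D$.

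The main obstacle I anticipate is matching the asymptotic prefactor exactly with $\nu_\alpha([u])$, rather than merely recovering the correct exponential growth rate $(M+1)^n$. For this I would exploit Krieger's Markovian description of $\nu_\alpha$ (pushing an $\alpha_k$ with probability $1/(M+1)$ for each $k$ and popping the top of the stack with probability $1/(M+1)$ whenever possible) and prove via a large-deviations estimate that, conditional on $H_n(\omega)>0$, the normalized empirical occupation of a uniformly random $\omega\in{\rm Per}_{\alpha,n}(\sigma)$ concentrates on $\nu_\alpha$ as $n\to\infty$; the drift $H_n/n\to(M-1)/(M+1)>0$ under $\nu_\alpha$ is what forces the selection of $\nu_\alpha$ rather than $\nu_\beta$ from the set of MMEs.
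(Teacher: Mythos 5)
Your reduction to cylinder counts, your bracket-reversal involution for the symmetry $\#{\rm Per}_{\alpha,n}(\sigma)=\#{\rm Per}_{\beta,n}(\sigma)$ and the third limit, and your general instinct that a large-deviations/entropy-maximization argument should select $\nu_\alpha$ are all sound and consistent with what the paper does. But the decisive step --- the mechanism that actually forces equidistribution toward $\nu_\alpha$ rather than some other measure of maximal entropy --- is missing, and the two concrete mechanisms you propose do not work as stated. The direct large-deviations argument on $\Sigma_D$ fails for the following reason: the rate function $\log(M+1)-h(\cdot,\sigma)$ vanishes on the entire segment $\{t\nu_\alpha+(1-t)\nu_\beta\colon t\in[0,1]\}$, and the constraint $H_n(\omega)>0$ only places the empirical measures $V_n(\omega)$ in the open set $\{\nu\colon\int\varphi\,{\rm d}\nu>0\}$ with $\varphi=\sum_k(\mathbbm{1}_{\Sigma_D(0;\alpha_k)}-\mathbbm{1}_{\Sigma_D(0;\beta_k)})$, whose closure still contains every $t\nu_\alpha+(1-t)\nu_\beta$ with $t\geq\frac{1}{2}$ (since $\int\varphi\,{\rm d}\nu_\alpha=\frac{M-1}{M+1}=-\int\varphi\,{\rm d}\nu_\beta$). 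An upper bound of Kifer type therefore only yields that limit points of the empirical distributions lie in this segment; it cannot distinguish $\nu_\alpha$ from, say, $\frac{1}{2}\nu_\alpha+\frac{1}{2}\nu_\beta$, because exponential counting at scale $(M+1)^n$ is blind to the difference. Your alternative via a countable-state Markov lift tracking stack states is also problematic: periodic points with $H_n(\omega)>0$ have unboundedly growing stacks and do not lift to periodic orbits of such a presentation (only the excluded set ${\rm Per}_{0,n}(\sigma)$ does), and the claimed ``two positive-recurrent irreducible components'' is unsubstantiated. The colored ballot/saddle-point route is not ruled out, but you leave the entire computation --- matching the prefactor $\nu_\alpha([u])$ for an arbitrary admissible cylinder $u$, which itself depends on the bracket-matching structure of $u$ --- undone, and this is where essentially all of the difficulty lives.

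The paper resolves exactly this degeneracy by first transporting the periodic points out of $\Sigma_D$: the map $\phi_\alpha$ (recoloring all right brackets by a single symbol $\beta$) is injective on ${\rm Per}_{\alpha,n}(\sigma)\subset B_\alpha$ and sends it into ${\rm Per}_n(\sigma_\alpha)$ for the \emph{full} shift $\Sigma_\alpha$ on $M+1$ symbols, where the MME is unique, so the rate function $J_\alpha$ has $\xi_\alpha$ as its only zero and Kifer's upper bound gives genuine concentration. The correct normalization comes from the Hamachi--Inoue count $\#{\rm Per}_{\alpha,n}(\sigma)\geq\frac{1}{3}(M+1)^n$ (Lemma~\ref{per-lem}), and the result is pulled back to $\Sigma_D$ through the Borel conjugacy $\psi_\alpha$, with a separate (small but necessary) argument that weak* convergence survives the restriction to the non-closed set $K_\alpha$. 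If you want to salvage your approach, the transfer to $\Sigma_\alpha$ (or an equivalent device that collapses the segment of maximal-entropy measures to a point) is the ingredient you need to add.
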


We hope that Theorem~\ref{theoremb} sheds some light on distributions of periodic points of general subshifts for which the MMEs are not unique.
For such examples other than the Dyck shift, see e.g., \cite{GK18,Hay13,KOR16,Pav16} and the references therein. 
Little is known on how the coexisting MMEs can be represented by periodic points
in these examples.

Krieger \cite{Kri74} proved that there exist two different full shifts $\Sigma_\alpha$, $\Sigma_\beta$ on $M+1$ symbols, shift-invariant Borel sets $K_\gamma\subset\Sigma_\gamma$
($\gamma\in\{\alpha,\beta\}$) and homeomorphisms $\psi_\gamma\colon K_\gamma\to\Sigma_D$ that commute with the left shifts. The Bernoulli measure $\xi_\gamma$ on $\Sigma_\gamma$ associated with the probability vector $(\frac{1}{M+1},\ldots,\frac{1}{M+1})$ gives measure $1$ to $K_\gamma$, and satisfies $\nu_\gamma=\xi_\gamma\circ\psi_\gamma^{-1}$. Moreover,
$\psi_{\gamma}^{-1}({\rm Per}_{\gamma,n}(\sigma))$ is contained in $K_\gamma$ for all $n\in\mathbb N$. We show that the set of these 
periodic points embedded into $\Sigma_\gamma$ are distributed according to $\xi_\gamma$ in the weak* topology on $M(K_\gamma)$ as their periods tends to infinity. 
To show this convergence,
Bowen's argument \cite{Bow71} cannot be used directly 
since $\psi_{\gamma}^{-1}({\rm Per}_{\gamma,n}(\sigma))$ is not a separated set.
We establish the convergence by means of a large deviations approach of Kifer \cite{Kif94}.  
Finally we transfer this convergence via $\psi_\gamma$ back to the Dyck shift space.
By the symmetry in the Dyck shift, 
the last equality in Theorem~\ref{theoremb}
follows from the first two.

The rest of this paper consists of two sections. In Section~2 we collect and prove preliminary results on the Dyck shift needed for the proof of Theorem~\ref{theoremb}.
 In Section~3 we prove Theorem~\ref{theoremb} and then
Theorem~\ref{theorema}.
 
\section{Preliminaries on the Dyck shift}

Throughout this section,
 let $M\geq2$ be an integer and let $\Sigma_D$ be the Dyck shift on $2M$ symbols. 
 After introducing basic notations 
 in Section~\ref{notation},  we delve into the structure of $\Sigma_D$ in Section~\ref{iso-sec} and Section~\ref{DM-str}. In Section~\ref{MME-sec} we outline the construction of the two ergodic MMEs by Krieger \cite{Kri74}.
In Section~\ref{count-sec} we estimate the number of periodic points using results of Hamachi and Inoue \cite{HI05}.
\subsection{Notation}\label{notation}Let $S$ be a non-empty finite discrete set, called an alphabet, and let $S^{\mathbb Z}$ denote 
the two-sided Cartesian product topological space of $S$, called the {\it full shift}.
The left shift acts continuously on $S^{\mathbb Z}$.
  A {\it subshift} over the alphabet $S$ is a shift-invariant closed subset of $S^{\mathbb Z}$. 
  For a subshift $\Sigma$ over $S$ and for $j\in\mathbb Z$, $n\in\mathbb N$, $\theta=\theta_1\cdots\theta_n\in S^n$, define 
 \[\Sigma(j;\theta)=\{(\omega_i)_{i\in\mathbb Z}\in\Sigma\colon \omega_i=\theta_{i-j+1}\text{ for }i=j,\ldots, j+n-1\}.\]

  We introduce two full shifts over different alphabets consisting of $M+1$ symbols:
 \[\Sigma_\alpha=(D_\alpha\cup\{\beta\})^{\mathbb Z}\quad\text{and}\quad\Sigma_\beta=(\{\alpha\}\cup D_\beta)^{\mathbb Z}.\]
 Let $\sigma_\alpha$, $\sigma_\beta$ denote the left shifts acting on $\Sigma_\alpha$, $\Sigma_\beta$ respectively.
Let $\xi_\alpha$, $\xi_\beta$ denote the 
  Bernoulli measures on 
$\Sigma_{\alpha}$,
$\Sigma_{\beta}$ respectively associated with the probability vector  $(\frac{1}{M+1},\ldots,\frac{1}{M+1})$.

We work on three subshifts $\Sigma_D$, $\Sigma_\alpha$, $\Sigma_\beta$, and Borel probability measures on them.
For readability, 
we use the letters  
$\nu$ and $\xi$ (with subscripts) to denote elements of  $M(\Sigma_D)$ and $M(\Sigma_\gamma)$ $(\gamma=\alpha,\beta)$ respectively.
The letters $\omega$ and $\zeta$ are used to denote points in $\Sigma_D$ and $\Sigma_\gamma$ $(\gamma=\alpha,\beta)$ respectively. 
Let $C(\Sigma_\alpha)$,  $C(\Sigma_\beta)$ denote the spaces of real-valued continuous functions on  $\Sigma_\alpha$, $\Sigma_\beta$ respectively endowed with the supremum norm.

\subsection{Classification of ergodic measures }\label{iso-sec}
Similarly to the definition \eqref{hn-def}, for each $i\in\mathbb Z$ we define a function $H_i\colon \Sigma_D\to\mathbb Z$ by
      \[H_i(\omega)=\begin{cases}\sum_{j=0}^{i-1} \sum_{k=1}^{M}(\delta_{{\alpha_k},\omega_j}-\delta_{\beta_k,\omega_j})&\text{ for }  i\geq1,\\\sum_{j=i}^{-1} \sum_{k=1}^{M}(\delta_{{\beta_k},\omega_j}-\delta_{\alpha_k,\omega_j})&\text{ for } i\leq -1,\\    0&\text{ for }i=0.\end{cases}\]
For $i$, $j\in\mathbb Z$ define \[\{H_i=H_j\}=\{\omega\in\Sigma_D\colon H_i(\omega)=H_j(\omega)\}.\]
We introduce three pairwise disjoint shift invariant Borel sets:
    \[\label{3-sets}\begin{split}A_0&=\bigcap_{i=-\infty }^{\infty}\left(\left(\bigcup_{j=1}^\infty\{ H_{i+j}=H_i\}\right)\cap\left(\bigcup_{j=1}^\infty\{ H_{i-j}=H_i\}\right)\right);\\
A_\alpha&=\left\{\omega\in\Sigma_D\colon
\lim_{i\to\infty}H_i(\omega)
=\infty\ \text{ and } \ \lim_{i\to-\infty}H_i(\omega)=-\infty\right\};\\
A_\beta&=\left\{\omega\in\Sigma_D\colon
\lim_{i\to\infty}H_i(\omega)=-\infty\ \text{ and } \
\lim_{i\to-\infty}H_i(\omega)=\infty\right\}.\end{split}\]
Note that all the three sets are dense in $\Sigma_D$.

\begin{lemma}[\cite{Kri74}, pp.102--103]
\label{trichotomy}
If $\nu\in M(\Sigma_D,\sigma)$ is ergodic, then either $\nu(A_0)=1$, 
$\nu(A_\alpha)=1$ or $\nu(A_\beta)=1$.
\end{lemma}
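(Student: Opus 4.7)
The key observation is that $H_i$ is a Birkhoff sum of a very simple continuous function. Define $h\colon \Sigma_D \to \mathbb Z$ by $h(\omega) = \sum_{k=1}^M (\delta_{\alpha_k, \omega_0} - \delta_{\beta_k, \omega_0})$; since $D = D_\alpha \sqcup D_\beta$, $h$ takes only the values $\pm 1$. For $i \geq 1$ one has $H_i = S_i h := \sum_{j=0}^{i-1} h \circ \sigma^j$, while for $i \geq 1$ a short computation shows $H_{-i}(\omega) = -S_i h(\sigma^{-i}\omega)$. Put $c := \int h\, d\nu \in [-1, 1]$. Birkhoff's ergodic theorem applied to $\sigma$ and to $\sigma^{-1}$ gives, for $\nu$-a.e.\ $\omega$, $H_i(\omega)/i \to c$ as $i \to +\infty$ and $H_{-i}(\omega)/i \to -c$ as $i \to +\infty$. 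When $c > 0$ these force $H_i(\omega) \to +\infty$ and $H_{-i}(\omega) \to -\infty$, giving $\nu(A_\alpha) = 1$; the case $c < 0$ is symmetric and yields $\nu(A_\beta) = 1$. Since $A_0$, $A_\alpha$, $A_\beta$ are pairwise disjoint, the remaining two have $\nu$-measure $0$ in each of these cases.

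The substantive work lies in the critical case $c = 0$, where the ergodic theorem is silent on whether $(H_i)$ returns to zero. Here I would invoke the recurrence theorem for $\mathbb Z$-valued cocycles over ergodic systems (Atkinson's theorem, equivalently Schmidt's conservativity criterion for $\mathbb Z$-extensions): if $\int h\, d\nu = 0$ and $\nu$ is $\sigma$-ergodic, the skew product $(\omega, k) \mapsto (\sigma\omega, k + h(\omega))$ on $\Sigma_D \times \mathbb Z$ is conservative, so $\nu$-a.e.\ $\omega$ satisfies $H_n(\omega) = 0$ for infinitely many $n \geq 1$. Applying the same statement to $(\sigma^{-1}, h)$ produces infinitely many $n \geq 1$ with $H_{-n}(\omega) = 0$. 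Writing $B_+ = \bigcup_{j \geq 1}\{H_j = 0\}$ and $B_- = \bigcup_{j \geq 1}\{H_{-j} = 0\}$, we thus have $\nu(B_+ \cap B_-) = 1$. A direct case check establishes the cocycle identity $H_{i+j}(\omega) - H_i(\omega) = H_j(\sigma^i \omega)$ for all $i \in \mathbb Z$ and $j \geq 1$, together with its analog for $j \to -j$, so the $i$-th factor in the definition of $A_0$ equals $\sigma^{-i}(B_+ \cap B_-)$. By $\sigma$-invariance each has full $\nu$-measure; the countable intersection $A_0 = \bigcap_{i \in \mathbb Z} \sigma^{-i}(B_+ \cap B_-)$ therefore also has full $\nu$-measure.

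\textbf{Main obstacle.} The only nontrivial ingredient is recurrence of the $\pm 1$-valued cocycle $(H_n)$ when $c = 0$: the Birkhoff theorem alone cannot rule out the walk wandering to $\pm\infty$ at sublinear rate. The cleanest resolution is Atkinson's theorem (or the conservativity of $\mathbb Z$-extensions over ergodic systems with zero-mean cocycle), which applies directly here because $h$ is bounded and integer-valued and $\nu$ is assumed ergodic.
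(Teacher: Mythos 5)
The paper does not actually prove this lemma; it is quoted verbatim from Krieger with the citation [Kri74, pp.~102--103], so there is no in-paper argument to compare against. Your proof is correct and self-contained modulo the one external ingredient you name. The easy half is fine: $H_i=S_ih$ for $i\geq 1$ and $H_{-i}(\omega)=-S_ih(\sigma^{-i}\omega)$ match the paper's definition of $H_i$ for negative indices, so Birkhoff applied to $\sigma$ and $\sigma^{-1}$ handles $c\neq 0$, and pairwise disjointness of $A_0$, $A_\alpha$, $A_\beta$ (which holds because on $A_\alpha\cup A_\beta$ each level set $\{i\colon H_i(\omega)=v\}$ is finite, so the extremal index has no return) disposes of the other two sets. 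The substantive case $c=0$ is correctly reduced to recurrence of the bounded integer-valued zero-mean cocycle $(H_n)$, for which Atkinson's theorem (equivalently, conservativity of the $\mathbb Z$-extension) is exactly the right tool, applied once to $(\sigma,h)$ and once to $(\sigma^{-1},h\circ\sigma^{-1})$; the cocycle identity $H_{i+j}(\omega)-H_i(\omega)=H_j(\sigma^i\omega)$ then converts "returns to $0$ from time $0$" into "returns to level $H_i$ from time $i$" for every $i$, and shift-invariance gives $\nu(A_0)=1$ as a countable intersection of full-measure sets. Two remarks: Atkinson's theorem postdates Krieger's 1974 paper, so Krieger's original argument is necessarily organized differently, but invoking Atkinson is the standard modern shortcut and is legitimate here; and since $h$ only takes the values $\pm1$, one could in principle replace Atkinson by a more elementary ladder-epoch argument, though that is not needed for correctness.
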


\subsection{Construction of Borel embeddings of the full shift}\label{DM-str}
Under the notation in Section~\ref{notation}, we introduce two shift-invariant Borel sets of $\Sigma_D$:
\[\begin{split}B_\alpha&=\bigcap_{i=-\infty}^\infty\bigcup_{k=1}^M\left(\Sigma_{D}(i;\alpha_k)\cup\left(\Sigma_{D}(i;\beta_k)\cap\bigcup_{j=1}^\infty\{ H_{i-j+1}=H_{i+1}\}\right)\right);\\
B_\beta&=\bigcap_{i=-\infty}^\infty\bigcup_{k=1}^M\left(\Sigma_{D}(i;\beta_k)
\cup\left(\Sigma_{D}(i;\alpha_k)\cap \bigcup_{j=1}^\infty\{H_{i+j}=H_i\}\right)\right).\end{split}\]
 The set $B_\alpha$ (resp. $B_\beta$)
is precisely the set of sequences in $\Sigma_D$ such that any right (resp. left) bracket in the sequence is closed.
One can check that
\begin{equation}\label{subset-AB}A_0\cup A_\alpha\subset B_\alpha\ \text{ and }\ A_0\cup A_\beta\subset B_\beta.\end{equation}

Define 
 $\phi_\alpha\colon \Sigma_D\to \Sigma_\alpha$ 
by
\[(\phi_\alpha(\omega))_i=\begin{cases}
  \beta&\text{ if }\omega_i\in D_\beta,\\
 \omega_i &\text{ otherwise.}
\end{cases}\]
In other words, $\phi_\alpha(\omega)$ is obtained by replacing all $\beta_k$, $k\in\{1,\ldots,M\}$ in $\omega$ by $\beta$.
Clearly $\phi_\alpha$ is continuous.
Similarly, define $\phi_\beta\colon \Sigma_D\to \Sigma_\beta$ by
\[(\phi_\beta(\omega))_i=\begin{cases}
    \alpha&\text{ if }\omega_i\in D_\alpha,\\
    \omega_i&\text{ otherwise.}
\end{cases}\]
In other words, $\phi_\beta(\omega)$ is obtained by replacing all $\alpha_k$, $k\in\{1,\ldots,M\}$ in $\omega$ by $\alpha$.
Clearly $\phi_\beta$ is continuous too. We set
\[K_\alpha=\phi_\alpha(B_\alpha)\ \text{ and }\ K_\beta=\phi_\beta(B_\beta).\]

    For each $i\in\mathbb Z$ define $ H_{\alpha,i}\colon \Sigma_\alpha\to\mathbb Z$ by       
      \[\begin{split}H_{\alpha,i}(y)&=\begin{cases}\sum_{j=0}^{i-1} \sum_{k=1}^{M}(\delta_{{\alpha_k},y_j}-\delta_{\beta,y_j})&\text{ for }  i\geq1,\\\sum_{j=i}^{-1} \sum_{k=1}^{M}(\delta_{{\beta},y_j}-\delta_{\alpha_k,y_j})&\text{ for } i\leq -1,\\
      0&\text{ for }i=0.\end{cases}\end{split}\]
We now define $\psi_\alpha\colon K_\alpha\to D^\mathbb Z$ by
\[(\psi_\alpha(y))_i=\begin{cases}
  \beta_k&\text{ if }y_i=\beta,\ y_{s_\alpha(i,y)}=
  \alpha_k,\ k\in\{1,\ldots,M\},\\
   y_i&\text{ otherwise,}
\end{cases}\]
where \[s_\alpha(i,y)=\max\{j<i+1\colon  H_{\alpha,j}(y)= H_{\alpha,i+1}(y)\}.\]
Clearly $\psi_\alpha$ is continuous. Similarly, for each $i\in\mathbb Z$ we define 
$H_{\beta,i}\colon \Sigma_\beta\to\mathbb Z$ by 
      \[\begin{split}
            H_{\beta,i}(y)&=\begin{cases}\sum_{j=0}^{i-1} \sum_{k=1}^{M}(\delta_{{\alpha},y_j}-\delta_{\beta_k,y_j})&\text{ for }  i\geq1,\\\sum_{j=i}^{-1} \sum_{k=1}^{M}(\delta_{{\beta_k},y_j}-\delta_{\alpha,y_j})&\text{ for } i\leq -1,\\
      0&\text{ for }i=0.\end{cases}\end{split}\]
We also define $\psi_\beta\colon K_\beta\to D^\mathbb Z$ by
\[(\psi_\beta(y))_i=\begin{cases}
   \alpha_k&\text{ if }y_i=\alpha,\ y_{s_\beta(i,y)}=\beta_k,\ k\in\{1,\ldots,M\},\\
    y_i&\text{ otherwise, }
\end{cases}\]
where \[s_\beta(i,y)=\min\{j>i\colon  H_{\beta,j}(y)= H_{\beta,i}(y)\}.\]
Clearly $\psi_\beta$ is continuous too.

 \begin{lemma}[\cite{Kri74}, Section~4]\label{include-lem}
 Let $\gamma\in\{\alpha,\beta\}$.
\begin{itemize}
\item[(a)] 
$\psi_\gamma(K_\gamma)=B_\gamma$, and $\psi_\gamma$ is a homeomorphism whose inverse is $\phi_\gamma|_{B_\gamma}$.
\item[(b)] 
$\phi_\gamma\circ\sigma|_{B_\gamma}=\sigma_\gamma\circ\phi_\gamma|_{B_\gamma}$ and $\sigma^{-1}\circ\psi_\gamma=\psi_\gamma\circ\sigma_\gamma^{-1}|_{K_\gamma }$.
\end{itemize}
\end{lemma}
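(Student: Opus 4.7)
The plan is to verify (a) by showing that $\phi_\gamma|_{B_\gamma}$ and $\psi_\gamma$ are mutually inverse continuous bijections, and then derive (b) from the fact that $\phi_\gamma$ is a one-block factor map. By the obvious symmetry exchanging $\alpha$ and $\beta$, it suffices to treat $\gamma=\alpha$. Throughout, I would keep in mind that $B_\alpha$ is exactly the set of $\omega\in\Sigma_D$ such that every occurrence of a symbol in $D_\beta$ is matched by an earlier $\alpha_k$ of the correct index, via a balanced bracket expression.

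The core step is the identity $\psi_\alpha\circ\phi_\alpha=\mathrm{id}$ on $B_\alpha$. Fix $\omega\in B_\alpha$ and set $y=\phi_\alpha(\omega)\in K_\alpha$. If $\omega_i\in D_\alpha$, both $y_i$ and $(\psi_\alpha(y))_i$ equal $\omega_i$ directly from the definitions. The only nontrivial case is $\omega_i=\beta_k$. Membership of $\omega$ in $B_\alpha$ provides an index $j^{*}<i$ with $\omega_{j^{*}}=\alpha_k$ and $\omega_{j^{*}+1}\cdots\omega_{i-1}$ a balanced word in $D^{*}$. I would translate the balanced-word condition into the inequality $H_{\alpha,l}(y)-H_{\alpha,j^{*}+1}(y)\geq 0$ for every $l\in[j^{*}+1,i]$ with equality at $l=i$, which gives $H_{\alpha,j^{*}}(y)=H_{\alpha,i+1}(y)$ and rules out any $j\in(j^{*},i]$ achieving this common value. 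Hence $s_\alpha(i,y)=j^{*}$; since $y_{j^{*}}=\alpha_k$, the definition of $\psi_\alpha$ returns $(\psi_\alpha(y))_i=\beta_k=\omega_i$.

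Granting the previous step, $\phi_\alpha\circ\psi_\alpha=\mathrm{id}$ on $K_\alpha=\phi_\alpha(B_\alpha)$ follows at once by writing $y=\phi_\alpha(\omega)$, applying $\psi_\alpha$ to recover $\omega$, and applying $\phi_\alpha$ again. In particular $\psi_\alpha(K_\alpha)=B_\alpha$ and $\psi_\alpha$ is a continuous bijection with continuous inverse $\phi_\alpha|_{B_\alpha}$: continuity of $\phi_\alpha$ is immediate from the coordinate-wise definition, and continuity of $\psi_\alpha$ at $y\in K_\alpha$ uses that, for each prescribed finite range of indices, the matching-bracket positions $s_\alpha(i,y)$ are themselves finite, so agreement of $y$ and $y'\in K_\alpha$ on a sufficiently large window forces the values of $\psi_\alpha$ to coincide on the prescribed range. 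Being a continuous bijection between Hausdorff spaces with continuous inverse, $\psi_\alpha$ is a homeomorphism onto $B_\alpha$. For (b), the identity $\phi_\alpha\circ\sigma=\sigma_\alpha\circ\phi_\alpha$ holds on all of $\Sigma_D$ because $\phi_\alpha$ is defined coordinate-wise, and inverting this on $B_\alpha$ via (a) yields $\sigma\circ\psi_\alpha=\psi_\alpha\circ\sigma_\alpha$ on $K_\alpha$, which is equivalent to the stated $\sigma^{-1}\circ\psi_\alpha=\psi_\alpha\circ\sigma_\alpha^{-1}|_{K_\alpha}$.

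The main obstacle is the combinatorial identification $s_\alpha(i,y)=j^{*}$. Once one interprets the equality $H_{\alpha,j}(y)=H_{\alpha,i+1}(y)$ as "the partial bracket-balance returns to its value just before $\alpha_{k}$", the argument is a standard property of balanced bracket expressions, but some care is needed to confirm that the maximum in the definition of $s_\alpha(i,y)$ is attained precisely at $j^{*}$ and that $y_{j^{*}}\in D_\alpha$, which in turn is automatic from $\omega_{j^{*}}=\alpha_k$ and the definition of $\phi_\alpha$.
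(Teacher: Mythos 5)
The paper does not prove this lemma at all: it is quoted from Krieger's paper (Section~4 of \cite{Kri74}) and used as a black box. Your argument fills in that omitted proof correctly, and it is essentially the standard bracket-matching argument behind Krieger's construction: the key identity $s_\alpha(i,y)=j^{*}$ is exactly the statement that the height function $H_{\alpha,\cdot}$ last visits the level $H_{\alpha,i+1}(y)$ at the position of the matching left bracket, and your chain $H_{\alpha,l}(y)\ge H_{\alpha,j^*+1}(y)$ on $[j^*+1,i]$ with equality at $l=i$, hence $H_{\alpha,j^*}(y)=H_{\alpha,i+1}(y)$ and strict excess in between, is the right way to see it. Two small points deserve a word more care than you give them. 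First, the definition of $B_\alpha$ only supplies some $j\ge1$ with $H_{i-j+1}(\omega)=H_{i+1}(\omega)$; one should take the largest such position $j^*$, observe that the strict excess on $(j^*,i]$ forces $\omega_{j^*}\in D_\alpha$, and then invoke admissibility in $\Sigma_D$ (the relation $\alpha_{k'}\cdot\beta_k=\delta_{k'k}$) to conclude that the index of $\omega_{j^*}$ agrees with that of $\omega_i=\beta_k$. Second, the "obvious symmetry exchanging $\alpha$ and $\beta$" is really a symmetry composed with time reversal — this is why $s_\beta$ is a minimum over $j>i$ while $s_\alpha$ is a maximum over $j<i+1$ — so the $\gamma=\beta$ case is analogous but mirrored rather than literally identical. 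Neither point is a gap; the proof is sound and, unlike the paper, self-contained.
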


Elements of $M(\Sigma_\gamma)$ that give measure $1$
to $K_\gamma$ can be transported via $\psi_\gamma$ 
to elements of $M(\Sigma_D)$.
 The lemma below gives a sufficient condition for ergodic elements of $M(\Sigma_\gamma,\sigma_\gamma)$ to be transported
to elements of $M(\Sigma_D)$.
Let $\mathbbm{1}_{(\cdot)}$ denote the indicator function for a set.

\begin{lemma}[\cite{STYY}, Lemma~3.3]\label{gyak-lem}
\ 
\begin{itemize}
    \item[(a)] If $\xi\in M(\Sigma_\alpha,\sigma_\alpha)$ is ergodic and $\int\mathbbm{1}_{\Sigma_{\alpha}(0;\beta)}{\rm d}\xi<\frac{1}{2}$ then $\xi(K_\alpha)=1$.
    \item[(b)] If $\xi\in M(\Sigma_\beta,\sigma_\beta)$ is ergodic and $\int\mathbbm{1}_{\Sigma_{\beta}(0;\alpha)}{\rm d}\xi<\frac{1}{2}$ then $\xi(K_\beta)=1$.
\end{itemize}
\end{lemma}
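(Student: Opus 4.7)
The plan is to prove (a); part (b) follows by symmetry, exchanging the roles of $\alpha$ and $\beta$. The goal is to show that $\xi$-almost every $y \in \Sigma_\alpha$ belongs to $K_\alpha = \phi_\alpha(B_\alpha)$. The strategy is to translate membership in $K_\alpha$ into a statement about the running sums $H_{\alpha,i}(y)$ that can be verified by the Birkhoff ergodic theorem.

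First I would apply Birkhoff's ergodic theorem to $\sigma_\alpha$ and the continuous function
\[g(y)=\sum_{k=1}^{M}\mathbbm{1}_{\Sigma_\alpha(0;\alpha_k)}(y)-\mathbbm{1}_{\Sigma_\alpha(0;\beta)}(y),\]
whose integral is $c := 1 - 2\int \mathbbm{1}_{\Sigma_\alpha(0;\beta)}\,{\rm d}\xi$; by hypothesis $c > 0$. Ergodicity of $\xi$, combined with the identifications $n^{-1}H_{\alpha,n}(y) = n^{-1}\sum_{j=0}^{n-1} g(\sigma_\alpha^j y)$ and $n^{-1}H_{\alpha,-n}(y) = -n^{-1}\sum_{j=1}^{n} g(\sigma_\alpha^{-j} y)$, yields for $\xi$-a.e.\ $y$
\[\lim_{n \to \infty} H_{\alpha,n}(y) = +\infty \quad\text{and}\quad \lim_{n \to -\infty} H_{\alpha,n}(y) = -\infty.\]
Denote by $E \subset \Sigma_\alpha$ this $\xi$-full-measure set.

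Next I would show $E \subset K_\alpha$. Fix $y \in E$ and $i \in \mathbb{Z}$ with $y_i = \beta$. Since $j \mapsto H_{\alpha,j}(y)$ changes by $\pm 1$ at each step and tends to $-\infty$ as $j \to -\infty$, a discrete intermediate value argument produces some $j_0 < i$ with $H_{\alpha,j_0}(y) = H_{\alpha,i+1}(y)$, so in particular $s := s_\alpha(i,y)$ exists. A short case analysis on the consecutive values $H_{\alpha,s}(y)$, $H_{\alpha,s+1}(y)$ using the maximality of $s$ rules out $y_s = \beta$: were it $\beta$, then $H_\alpha$ would have to cross the level $H_{\alpha,i+1}(y)$ somewhere strictly between $s+1$ and $i$, contradicting maximality. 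Hence $y_s = \alpha_k$ for some $k$. Define $\omega \in D^{\mathbb{Z}}$ by $\omega_j = y_j$ when $y_j \in D_\alpha$ and $\omega_j = \beta_k$, with $k$ chosen as above, when $y_j = \beta$. Then $\phi_\alpha(\omega) = y$ by construction, and since the relabelling $\phi_\alpha$ preserves alpha/beta counts one has $H_i(\omega) = H_{\alpha,i}(y)$ for every $i$, so the matching indices produced above witness $\omega \in B_\alpha \subset \Sigma_D$. Thus $y \in \phi_\alpha(B_\alpha) = K_\alpha$, giving $\xi(K_\alpha) \geq \xi(E) = 1$.

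The main obstacle is the bracket-matching bookkeeping in the last step: confirming that the chosen index $s$ lands on an $\alpha$-letter, and that the resulting doubly-infinite word $\omega$ contains no forbidden adjacency $\alpha_{k'}\cdot\beta_{k''}$ with $k'\neq k''$ under reduction, so that genuinely $\omega \in \Sigma_D$. Both reduce to the observation that the maximality clause in the definition of $s_\alpha(i,y)$ prevents $H_\alpha$ from returning to the level $H_{\alpha,i+1}(y)$ on the interval $(s,i+1]$, which ensures that nested matchings inside $\omega$ are consistently indexed and propagate correctly through the stack-based reduction ${\rm red}$.
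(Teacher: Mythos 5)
The paper does not actually prove this lemma: it is imported verbatim from \cite{STYY} (Lemma~3.3 there), so there is no internal proof to compare yours against. Judged on its own, your argument is correct and is the natural one. The Birkhoff step is right: $n^{-1}H_{\alpha,n}\to 1-2\xi(\Sigma_\alpha(0;\beta))>0$ and $n^{-1}H_{\alpha,-n}\to-(1-2\xi(\Sigma_\alpha(0;\beta)))$ for $\xi$-a.e.\ $y$, so the level process drifts to $-\infty$ in the past; note that only this backward divergence (indeed only $\liminf_{i\to-\infty}H_{\alpha,i}=-\infty$) is needed for $K_\alpha$, since $B_\alpha$ only requires every right bracket to be closed on the left --- the forward divergence is superfluous. (You are implicitly reading $H_{\alpha,i}$ as taking $\pm1$ steps; the displayed formula in the paper literally gives $-M$ on a $\beta$-symbol, but the $\pm1$ normalization is clearly the intended one, as it is the only reading consistent with the threshold $\frac12$ in the hypothesis and with $H_i(\omega)=H_{\alpha,i}(\phi_\alpha(\omega))$.) Your discrete intermediate-value argument for the existence of $s_\alpha(i,y)$ and the maximality argument showing $y_{s_\alpha(i,y)}\in D_\alpha$ are both sound. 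The only compressed step is the verification that the reconstructed word $\omega$ lies in $\Sigma_D$; you correctly identify the crux (maximality of $s_\alpha(i,y)$ forces the path on $(s_\alpha(i,y),i+1)$ to stay strictly above the level $H_{\alpha,i+1}(y)$, so the matching $i\mapsto s_\alpha(i,y)$ is non-crossing and matched pairs carry the same index $k$, whence ${\rm red}$ never produces $\alpha_{k'}\beta_{k''}$ with $k'\neq k''$), but you state rather than execute the nesting induction. Filling that in is routine Dyck-path bookkeeping, so I would accept the proof with that step written out.
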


\begin{lemma}\label{dense-lem}
 Let $\gamma\in\{\alpha,\beta\}$.
 $K_\gamma$ is a dense subset of $\Sigma_\gamma$.\end{lemma}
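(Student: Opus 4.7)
The plan is to prove density by constructing, for each finite word $\theta=\theta_1\cdots\theta_n$ in the alphabet of $\Sigma_\gamma$, an explicit element of $K_\gamma$ lying in the cylinder $\Sigma_\gamma(0;\theta)$. By shift-invariance and the natural $\alpha\leftrightarrow\beta$ symmetry of the setup, it suffices to handle $\gamma=\alpha$, and since $K_\alpha=\phi_\alpha(B_\alpha)$, the task reduces to exhibiting $\omega\in B_\alpha$ with $\phi_\alpha(\omega)_0\cdots\phi_\alpha(\omega)_{n-1}=\theta$.

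The first step is to build $\omega$ explicitly. Set $\omega_i=\alpha_1$ for every $i\notin\{0,\ldots,n-1\}$, and for $i=0,1,\ldots,n-1$ assign $\omega_i$ by scanning $\theta_1,\ldots,\theta_n$ from left to right while maintaining a Dyck stack that is regarded as initially holding infinitely many $\alpha_1$'s from the left tail: when $\theta_{i+1}\in D_\alpha$, set $\omega_i=\theta_{i+1}$ and push it onto the stack; when $\theta_{i+1}=\beta$, pop the current top $\alpha_k$ and set $\omega_i=\beta_k$. The inexhaustible supply of $\alpha_1$'s makes the pop always defined, and by construction $\phi_\alpha(\omega)$ matches $\theta$ on $[0,n-1]$. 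Since $\omega_i=\alpha_1$ for all $|i|$ large, $H_i(\omega)\to+\infty$ as $i\to+\infty$ and $H_i(\omega)\to-\infty$ as $i\to-\infty$, so $\omega\in A_\alpha$, and then $\omega\in B_\alpha$ by the inclusion \eqref{subset-AB}.

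The remaining and main step is to verify $\omega\in\Sigma_D$: every finite subword $\omega_j\cdots\omega_i$ must reduce to a nonzero element. Running the left-to-right Dyck reduction on such a subword, the key claim is that whenever a pop is attempted at a position $r\in[j,i]$ with $\omega_r=\beta_l$, either the local stack is empty---so $\beta_l$ merely joins the unmatched-$\beta$ output---or its top is exactly $\alpha_l$. Indeed, the globally matching $\alpha_l$ was pushed at some position $p<r$; if $p\geq j$, then every global push-pop pair strictly above $\alpha_l$ during $[p,r]$ lies entirely in $[j,i]$ and is reproduced in the local trace, so the local stack retains $\alpha_l$ on top at time $r$; and if $p<j$, then every local push in $[j,r-1]$ has its global pop also in $[j,r-1]$ (all $\alpha$'s above $\alpha_l$ must be cleared before time $r$ globally), leaving the local stack empty at $r$. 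Hence the reduction never triggers the rule $\alpha_l\cdot\beta_m=0$ with $l\neq m$, which is the main obstacle for this lemma. Therefore $\omega\in\Sigma_D\cap B_\alpha$, so $y=\phi_\alpha(\omega)\in K_\alpha\cap\Sigma_\alpha(0;\theta)$, and density of $K_\alpha$ in $\Sigma_\alpha$ follows. An entirely analogous argument, scanning right-to-left against a right tail of $\beta_1$'s and using $A_\beta\subset B_\beta$ from \eqref{subset-AB}, gives density of $K_\beta$ in $\Sigma_\beta$.
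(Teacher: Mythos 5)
Your proof is correct, but it takes a genuinely different route from the paper. The paper's argument is measure-theoretic and very short: since $M\geq 2$, the uniform Bernoulli measure $\xi_\alpha$ gives the cylinder $\Sigma_\alpha(0;\beta)$ mass $\frac{1}{M+1}<\frac{1}{2}$, so Lemma~\ref{gyak-lem} (quoted from \cite{STYY}) yields $\xi_\alpha(K_\alpha)=1$; as $\xi_\alpha$ charges every nonempty open set, $K_\alpha$ is dense. You instead construct, for each cylinder of $\Sigma_\alpha$, an explicit point of $K_\alpha$ inside it, by lifting the word $\theta$ to a legal Dyck sequence via a stack seeded with an infinite left tail of $\alpha_1$'s, and then checking by hand (i) that every finite subword reduces to a nonzero element, via the local-stack-is-a-top-segment-of-the-global-stack invariant, and (ii) that $H_i\to\pm\infty$ appropriately, so that $\omega\in A_\alpha\subset B_\alpha$ by \eqref{subset-AB}. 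Both steps are sound; step (i) is the only delicate point and your case analysis on whether the matching push position $p$ lies in $[j,r)$ or before $j$ is exactly the right dichotomy. What each approach buys: the paper's proof is a three-line corollary of machinery it needs anyway ($\xi_\gamma(K_\gamma)=1$ is reused in Section~\ref{MME-sec} to define $\nu_\gamma$), whereas yours is self-contained, avoids the ergodic-theoretic input of Lemma~\ref{gyak-lem}, and produces explicit points of $K_\gamma$ in each cylinder --- at the cost of the combinatorial verification that the constructed sequence is admissible for the Dyck shift.
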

\begin{proof}
Clearly the Bernoulli measure $\xi_\alpha$ on $\Sigma_\alpha$
satisfies
$\int\mathbbm{1}_{\Sigma_{\alpha}(0;\beta)}{\rm d}\xi_\alpha<\frac{1}{2}$.
By Lemma~\ref{gyak-lem}(a)
 we have $\xi_\alpha(K_\alpha)=1$. Since $\xi_\alpha$ charges any nonempty open subset of $\Sigma_\alpha$, it follows that $K_\alpha$ is a dense subset of $\Sigma_\alpha$. A proof of the denseness of $K_\beta$ in $\Sigma_\beta$ is completely analogous.\end{proof}

\subsection{The ergodic MMEs for the Dyck shift}\label{MME-sec}


As in the proof of Lemma~\ref{dense-lem}, we have
$\xi_\alpha(K_\alpha)=1=\xi_\beta(K_\beta)$. 
Hence, the measures  \begin{equation}\label{ergmme}\nu_{\alpha}=\xi_\alpha\circ\psi_\alpha^{-1}\ \text{ and } \ \nu_\beta=\xi_\beta\circ\psi_\beta^{-1}\end{equation}
are 
Bernoulli 
of entropy $\log(M+1)$.
By Lemma~\ref{dense-lem}, they charge any non-empty open subset of $\Sigma_D$.
 From direct calculations based on \eqref{ergmme}, 
we deduce the following identities for $k=1,\ldots,M$:
 \begin{equation}\begin{split}\label{balance2}
 \nu_\alpha
\left(\Sigma_D(0;\alpha_k)\right)&=\nu_\alpha
\left(\sum_{j=1}^M\Sigma_D(0;\beta_j)\right)=\frac{1}{M+1};\\
\nu_\beta
\left(\sum_{j=1}^M\Sigma_D(0;\alpha_j)\right)&=
\nu_\beta\left(\Sigma_{D}(0;\beta_k)\right)=\frac{1}{M+1 }.\end{split}\end{equation}
In particular, $\nu_\alpha\neq\nu_\beta$ holds.
The ergodicity of $\xi_\alpha$, $\xi_\beta$ and \eqref{balance2}  altogether imply
 \begin{equation}\label{measure-nu}\nu_\alpha(A_\alpha )=1\ \text{ and }\ 
 \nu_\beta(A_\beta )=1.\end{equation}
 From \eqref{subset-AB}, Lemma~\ref{trichotomy} and \eqref{measure-nu}
 it follows that $\nu_\alpha$, $\nu_\beta$ are ergodic MMEs with entropy $\log(M+1)$, and that there is no other ergodic MME. We have outlined the proof of the following theorem due to Krieger \cite{Kri74}.

 \begin{thm}[\cite{Kri74}]\label{k-thm}
There exist exactly two shift invariant ergodic Borel probability measures of maximal entropy $\log(M+1)$ for $(\Sigma_D,\sigma)$. 
They are 
Bernoulli and 
 charge any non-empty open subset of  $\Sigma_D$.
\end{thm}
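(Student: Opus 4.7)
The plan is to construct the two ergodic MMEs as transports of the Bernoulli measures $\xi_\alpha$, $\xi_\beta$ from the auxiliary full shifts, derive their properties from those of $\xi_\gamma$, and then use the trichotomy of Lemma~\ref{trichotomy} together with the Borel isomorphism of Lemma~\ref{include-lem} to preclude any third ergodic MME.

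First I would verify that the measures $\nu_\gamma=\xi_\gamma\circ\psi_\gamma^{-1}$ in \eqref{ergmme} are well-defined shift-invariant ergodic Borel probability measures of entropy $\log(M+1)$. Since $\xi_\gamma$ assigns mass $\tfrac{1}{M+1}<\tfrac{1}{2}$ to each single symbol, Lemma~\ref{gyak-lem} yields $\xi_\gamma(K_\gamma)=1$, and by Lemma~\ref{include-lem} the restriction $\psi_\gamma\colon K_\gamma\to B_\gamma$ is a shift-equivariant Borel isomorphism with inverse $\phi_\gamma|_{B_\gamma}$. Consequently $(\Sigma_D,\sigma,\nu_\gamma)$ is measurably isomorphic to $(\Sigma_\gamma,\sigma_\gamma,\xi_\gamma)$, so $\nu_\gamma$ inherits ergodicity, the Bernoulli property, and the entropy equality $h(\nu_\gamma,\sigma)=\log(M+1)$. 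That $\nu_\gamma$ charges every nonempty open subset of $\Sigma_D$ combines Lemma~\ref{dense-lem} with the positivity of $\xi_\gamma$ on cylinders of $\Sigma_\gamma$.

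Next I would check the balance identities \eqref{balance2} directly: $\nu_\alpha(\Sigma_D(0;\alpha_k))=\xi_\alpha(\Sigma_\alpha(0;\alpha_k))=\tfrac{1}{M+1}$, while the symmetry of $\xi_\beta$ among the $\beta_j$'s gives $\nu_\beta(\Sigma_D(0;\alpha_k))=\tfrac{1}{M(M+1)}$, hence $\nu_\alpha\neq\nu_\beta$. For \eqref{measure-nu} I would invoke the strong law of large numbers under $\xi_\alpha$: the increment $\sum_k(\delta_{\alpha_k,y_0}-\delta_{\beta,y_0})$ has positive mean $\tfrac{M-1}{M+1}$, and the identity $H_i\circ\psi_\alpha=H_{\alpha,i}$ on $K_\alpha$ forces $H_i\to+\infty$ as $i\to+\infty$ and $H_i\to-\infty$ as $i\to-\infty$ almost surely with respect to $\nu_\alpha$; thus $\nu_\alpha(A_\alpha)=1$, and symmetrically $\nu_\beta(A_\beta)=1$.

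Finally, I would establish uniqueness. Let $\nu\in M(\Sigma_D,\sigma)$ be ergodic with $h(\nu,\sigma)=\log(M+1)$. By Lemma~\ref{trichotomy}, exactly one of $\nu(A_0), \nu(A_\alpha), \nu(A_\beta)$ equals $1$; by \eqref{subset-AB}, whenever $\nu(A_\alpha)=1$ or $\nu(A_0)=1$ we have $\nu(B_\alpha)=1$. In this situation Lemma~\ref{include-lem} makes $\nu\circ\phi_\alpha^{-1}$ an ergodic shift-invariant probability measure on $\Sigma_\alpha$ measurably isomorphic to $(\Sigma_D,\sigma,\nu)$, hence of entropy $\log(M+1)$; since $\Sigma_\alpha$ is a full shift on $M+1$ symbols whose unique MME is $\xi_\alpha$, we conclude $\nu\circ\phi_\alpha^{-1}=\xi_\alpha$ and thus $\nu=\nu_\alpha$. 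The subcase $\nu(A_0)=1$ is then ruled out because it would force both $\nu=\nu_\alpha$ and $\nu(A_\alpha)=1$, contradicting the disjointness $A_0\cap A_\alpha=\emptyset$; the $\nu(A_\beta)=1$ branch gives $\nu=\nu_\beta$ by the symmetric argument. The main obstacle is the measurable-isomorphism step: one must confirm that the Borel sets $B_\gamma, K_\gamma$, although not closed, carry a genuine shift-invariant structure on which $\phi_\gamma$ realizes a true entropy-preserving isomorphism, which is precisely the content of Lemma~\ref{include-lem}.
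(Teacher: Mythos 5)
Your proposal is correct and follows essentially the same route as the paper's outline in Section~2.4: construct $\nu_\gamma=\xi_\gamma\circ\psi_\gamma^{-1}$ via Lemmas~\ref{gyak-lem} and \ref{include-lem}, verify \eqref{balance2} and \eqref{measure-nu}, and then combine \eqref{subset-AB} with Lemma~\ref{trichotomy} to reduce uniqueness to the uniqueness of the MME on the full shift over $M+1$ symbols. You merely make explicit some steps the paper leaves implicit (the law of large numbers behind \eqref{measure-nu}, Parry's intrinsic ergodicity of the full shift, and the elimination of the $\nu(A_0)=1$ case), all of which are sound.
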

\subsection{Estimate of the number of periodic points}\label{count-sec}Hamachi and Inoue \cite{HI05} obtained 
 exact formulas on numbers of periodic points of the Dyck shift. For our purpose we prove the next lemma using 
 results in \cite{HI05}.
\begin{lemma}\label{per-lem} Let $\gamma\in\{\alpha,\beta\}$. For all sufficiently large $n\geq1$ we have \[\frac{1}{3}(M+1)^n\leq\#{\rm Per}_{\gamma,n}(\sigma )<(M+1)^n.\]\end{lemma}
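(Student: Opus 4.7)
The plan is to derive an exact binomial formula for $\#{\rm Per}_{\gamma,n}(\sigma)$ by transporting the counting problem to the full shift $\Sigma_\gamma$ via $\psi_\gamma$, and then to read off both bounds from elementary probabilistic estimates.

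First I would verify the period-preserving bijection
\[
{\rm Per}_{\alpha,n}(\sigma) \;\longleftrightarrow\; \{y\in\Sigma_\alpha : \sigma_\alpha^n y = y,\ H_{\alpha,n}(y) > 0\},
\]
realized by $\phi_\alpha$ and $\psi_\alpha$. For $\omega\in{\rm Per}_{\alpha,n}(\sigma)$, the identity $H_{i+n}(\omega)=H_i(\omega)+H_n(\omega)$ (immediate from periodicity and the definition of $H_i$), together with $H_n(\omega)>0$, yields $H_i(\omega)\to\pm\infty$ as $i\to\pm\infty$. Hence $\omega\in A_\alpha\subset B_\alpha$ by \eqref{subset-AB}, so $\phi_\alpha(\omega)\in K_\alpha$ by Lemma~\ref{include-lem}(a), and the conjugacy in Lemma~\ref{include-lem}(b) shows $\sigma_\alpha^n\phi_\alpha(\omega)=\phi_\alpha(\omega)$ with $H_{\alpha,n}(\phi_\alpha(\omega))=H_n(\omega)$. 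Conversely, for $y\in\Sigma_\alpha$ with $\sigma_\alpha^n y=y$ and $H_{\alpha,n}(y)>0$, the same periodicity identity for $H_{\alpha,\cdot}$, combined with the fact that $H_{\alpha,\cdot}(y)$ is a simple $\pm1$ walk, forces the walk to visit every integer level below its current value at some earlier time; hence $s_\alpha(i,y)$ is finite for every $i$ with $y_i=\beta$, so $y\in K_\alpha$ and $\psi_\alpha(y)\in{\rm Per}_{\alpha,n}(\sigma)$.

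Second, I would count on the full-shift side. A periodic point $y\in\Sigma_\alpha$ of period dividing $n$ is nothing but a word in $(D_\alpha\cup\{\beta\})^n$; if exactly $k$ positions carry $\beta$, then $H_{\alpha,n}(y)=n-2k$ and the number of such words is $\binom{n}{k}M^{n-k}$. The condition $H_{\alpha,n}(y)>0$ becomes $k<n/2$, giving
\[
\#{\rm Per}_{\alpha,n}(\sigma)=\sum_{0\leq k<n/2}\binom{n}{k}M^{n-k}.
\]
An entirely symmetric argument via $\psi_\beta$ yields the same formula for $\#{\rm Per}_{\beta,n}(\sigma)$; these identities are the ones derivable from \cite{HI05}.

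Third, both bounds follow at once. The upper bound $\#{\rm Per}_{\gamma,n}(\sigma)<(M+1)^n=\sum_{k=0}^n\binom{n}{k}M^{n-k}$ is strict for every $n\geq 1$, since the omitted term $k=n$ contributes at least $1$. For the lower bound I would normalize: dividing by $(M+1)^n$ rewrites the sum as $\Pr[X_n<n/2]$ where $X_n$ is $\mathrm{Binomial}(n,1/(M+1))$. Since $M\geq 2$ forces the mean $n/(M+1)\leq n/3$ strictly below $n/2$, the weak law of large numbers gives $\Pr[X_n<n/2]\to 1$, so this probability exceeds $1/3$ for all sufficiently large $n$. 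The main (if minor) subtlety is the bijection, which rests on the elementary observation that a periodic simple integer walk with nonzero drift per period visits every integer level in both directions of time; everything else is accounting.
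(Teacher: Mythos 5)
Your proof is correct, and it takes a genuinely different route from the paper's. The paper never touches the Krieger embedding here: it uses the $\alpha\leftrightarrow\beta$ symmetry to write $\#{\rm Per}_{\gamma,n}(\sigma)=\tfrac12(\#{\rm Per}_n(\sigma)-\#{\rm Per}_{0,n}(\sigma))$, computes $\#{\rm Per}_{0,n}(\sigma)$ directly, imports the exact value of $\#{\rm Per}_n(\sigma)$ from Hamachi--Inoue, and finishes with a Stirling estimate on a middle binomial coefficient. You instead obtain the exact formula $\#{\rm Per}_{\gamma,n}(\sigma)=\sum_{0\leq k<n/2}\binom{n}{k}M^{n-k}$ by transporting the count to the full shift $\Sigma_\gamma$; after the substitution $i=n-k$ this is precisely \eqref{per-eq1}, so the two computations agree. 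Your normalization of the sum as $\Pr[X_n<n/2]$ with $X_n\sim\mathrm{Binomial}(n,1/(M+1))$ is cleaner than the paper's endgame and actually yields the stronger conclusion $\#{\rm Per}_{\gamma,n}(\sigma)=(1-o(1))(M+1)^n$; it also sidesteps the identity \eqref{per-eq2}, which as stated holds only for $M=1$ and even $n$ (try $n=M=2$), though the lemma's conclusion is unaffected. What your route costs is the bijection itself. The forward direction is fully justified by the periodicity identity, \eqref{subset-AB} and Lemma~\ref{include-lem}; the converse is the one real gap in your write-up. Your random-walk observation does show that $s_\alpha(i,y)$ is finite for every $i$ with $y_i=\beta$, but you still must check that the resulting bracket assignment produces a point of $\Sigma_D$ lying in $B_\alpha$, i.e.\ that such a $y$ genuinely belongs to $\phi_\alpha(B_\alpha)=K_\alpha$ and that $\psi_\alpha(y)$ has no subword reducing to $0$; this amounts to re-deriving a piece of Krieger's Section~4 rather than citing it, and since the lower bound of the lemma depends on surjectivity, that verification should be written out. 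With that step completed, your argument is a valid and somewhat sharper proof.
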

\begin{proof}
By the symmetry in the Dyck shift, we have $\#{\rm Per}_{\alpha,n}(\sigma)=\#{\rm Per}_{\beta,n}(\sigma )$ for all $n\in\mathbb N$. Hence,
for each $\gamma\in\{\alpha,\beta\}$ 
we have \begin{equation}\label{per-eq10}\#{\rm Per}_{\gamma,n}(\sigma)=\frac{1}{2}(\#{\rm Per}_{n}(\sigma)-\#{\rm Per}_{0,n}(\sigma))\ \text{ for all }n\in\mathbb N.\end{equation}
A direct calculation shows
\begin{equation}\label{per-eq0}\#{\rm Per}_{0,n}(\sigma )=\begin{cases}\vspace{1mm}\displaystyle{\binom{n}{n/2 }M^{n/2}}&\ \text{ if $n$ is even,}\\ 0&\ \text{ if $n$ is odd.}\end{cases}\end{equation} 
Substituting \eqref{per-eq0} and the formula for
 $\#{\rm Per}_{n}(\sigma )$ in \cite[Lemma~2.5]{HI05} into the right-hand side of \eqref{per-eq10}, we get
\begin{equation}\label{per-eq1}\#{\rm Per}_{\gamma,n}(\sigma)=(M+1)^n-\sum_{i=0}^{\lfloor n/2\rfloor}\binom{n}{i}M^i,\end{equation}
where $\lfloor  s\rfloor$ for $s>0$ denotes the largest integer not exceeding $s$.
Hence the desired upper bound holds.

We have
\begin{equation}\label{per-eq2}\sum_{i=0}^{\lfloor n/2\rfloor}\binom{n}{i}M^i=\frac{1}{2}\left((M+1)^n+\binom{n}{\lfloor n/2\rfloor}M^{\lfloor n/2\rfloor}\right).\end{equation}
By Stirling's formula for factorials, for all sufficiently large $n$ we have
\begin{equation}\label{per-eq3}\binom{n}{\lfloor n/2\rfloor}M^{\lfloor n/2\rfloor}\leq\frac{1}{\sqrt{n}}(2\sqrt{M})^n.\end{equation}
Plugging \eqref{per-eq2}, \eqref{per-eq3} into the right-hand side of \eqref{per-eq1} and then rearranging the result yields the desired lower bound for all sufficiently large $n\geq1$. 
\end{proof}

\section{Distributions of periodic points}
To complete the proofs of the main results, in Section~\ref{restricted} 
we introduce a sequence of Borel probability measures on $M(\Sigma_\gamma)$ constructed from the periodic points in $\bigcup_{n\in\mathbb N}\phi_\gamma({\rm Per}_{\gamma,n}(\sigma))$, and
prove a large deviations upper bound using the result of Kifer \cite{Kif94}. 
In Section~\ref{str-code}
we analyze the structure of the coding map.
We prove 
Theorem~\ref{theoremb} in Section~\ref{pfthmb},
 and then prove
 Theorem~\ref{theorema}
 in Section~\ref{pfthma}.

\subsection{A large deviations upper bound}\label{restricted}
For each $\gamma\in\{\alpha,\beta\}$ and $n\in\mathbb N$, define 
$\widetilde\xi_{\gamma,n}\in M(M(\Sigma_\gamma))$ by
\[\widetilde\xi_{\gamma,n}=\frac{\sum_{\zeta\in \phi_\gamma({\rm Per}_{\gamma,n}(\sigma ))}\delta_{V_{n}(\sigma_\gamma,\zeta)}}{\#{\rm Per}_{\gamma,n}(\sigma ) },\]
where $V_n(\sigma_\gamma,\zeta)=n^{-1}(\delta_\zeta+\cdots+\delta_{\sigma_\gamma^{n-1}\zeta})\in M(\Sigma_\gamma)$, and $\delta_{V_{n}(\sigma_\gamma,\zeta)}\in M(M(\Sigma_\gamma))$ denotes the unit point mass at $V_n(\sigma_\gamma,\zeta)$. 
Define $J_\gamma\colon M(\Sigma_\gamma)\to [0,\infty]$
 by
\begin{equation}\label{free-def}J_\gamma(\xi)=\begin{cases}\log(M+1)-h(\xi,\sigma_\gamma)\ &\text{ if $\xi\in M(\Sigma_\gamma,\sigma_\gamma)$},\\
\infty&\text{ otherwise}.\end{cases}\end{equation}
Since the entropy function on $M(\Sigma_\gamma,\sigma_\gamma)$
is upper semicontinuous, $J_\gamma$ is lower semicontinuous. Note that $J_\gamma(\xi)=0$ if and only if $\xi=\xi_\gamma$.
\begin{lemma}\label{LDP-rest} 
Let $\gamma\in\{\alpha,\beta\}$. For any closed set $\mathcal C$ of $M(\Sigma_\gamma)$, 
\begin{equation*}
\limsup_{n\to\infty}\frac{1}{n}\log \widetilde\xi_{\gamma,n}(\mathcal C)\leq-\inf_{\mathcal C}J_\gamma,\end{equation*}
where $\inf\emptyset=\infty$ and $\log0=-\infty$.\end{lemma}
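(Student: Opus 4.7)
The plan is to dominate $\widetilde\xi_{\gamma,n}(\mathcal C)$ by the standard empirical periodic-orbit measure of the full shift $\sigma_\gamma$, and then invoke Kifer's large deviations upper bound \cite{Kif94}.

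First I would verify that $\phi_\gamma$ injectively maps ${\rm Per}_{\gamma,n}(\sigma)$ into ${\rm Per}_n(\sigma_\gamma)$. For $\omega\in{\rm Per}_{\gamma,n}(\sigma)$, the $n$-periodicity together with the definition of $H_i$ yields $H_{kn}(\omega)=kH_n(\omega)$ and $H_{-kn}(\omega)=-kH_n(\omega)$ for every $k\ge1$; hence ${\rm Per}_{\alpha,n}(\sigma)\subset A_\alpha$ and ${\rm Per}_{\beta,n}(\sigma)\subset A_\beta$, so by \eqref{subset-AB} both sets lie in the shift-invariant set $B_\gamma$. Lemma~\ref{include-lem}(a) makes $\phi_\gamma|_{B_\gamma}$ injective (it is the inverse of the homeomorphism $\psi_\gamma\colon K_\gamma\to B_\gamma$), while Lemma~\ref{include-lem}(b) implies $\sigma_\gamma^n\phi_\gamma(\omega)=\phi_\gamma(\sigma^n\omega)=\phi_\gamma(\omega)$, so $\phi_\gamma({\rm Per}_{\gamma,n}(\sigma))\subset{\rm Per}_n(\sigma_\gamma)$. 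Consequently, for every closed $\mathcal C\subset M(\Sigma_\gamma)$,
\[
\widetilde\xi_{\gamma,n}(\mathcal C)\le\frac{\#\{\zeta\in{\rm Per}_n(\sigma_\gamma):V_n(\sigma_\gamma,\zeta)\in\mathcal C\}}{\#{\rm Per}_{\gamma,n}(\sigma)}\le 3\cdot\frac{\#\{\zeta\in{\rm Per}_n(\sigma_\gamma):V_n(\sigma_\gamma,\zeta)\in\mathcal C\}}{(M+1)^n}
\]
for all sufficiently large $n$, using Lemma~\ref{per-lem} and the identity $\#{\rm Per}_n(\sigma_\gamma)=(M+1)^n$, which holds because $\Sigma_\gamma$ is a full shift on $M+1$ symbols.

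Next I would invoke the classical periodic-orbit large deviations upper bound. Since $(\Sigma_\gamma,\sigma_\gamma)$ is expansive, has the specification property, and has upper semicontinuous entropy, Kifer's theorem \cite{Kif94} applied to the zero potential yields
\[
\limsup_{n\to\infty}\frac{1}{n}\log\frac{\#\{\zeta\in{\rm Per}_n(\sigma_\gamma):V_n(\sigma_\gamma,\zeta)\in\mathcal C\}}{(M+1)^n}\le -\inf_{\mathcal C}\bigl(\log(M+1)-h(\cdot,\sigma_\gamma)\bigr)=-\inf_{\mathcal C}J_\gamma.
\]
Substituting this into the earlier inequality and absorbing the constant $3$ into the $\limsup$ produces the conclusion.

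The principal obstacle is the reduction step: verifying that every element of ${\rm Per}_{\gamma,n}(\sigma)$ actually satisfies the bracket-closure condition defining $B_\gamma$, so that $\phi_\gamma$ genuinely intertwines the dynamics and injects periodic points into periodic points. Once this is in place, the remaining ingredients — Lemma~\ref{per-lem} to keep the normalization $\#{\rm Per}_{\gamma,n}(\sigma)$ from being too small, and the full-shift LDP — are standard.
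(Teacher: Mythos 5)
Your proposal is correct and follows essentially the same route as the paper: dominate $\widetilde\xi_{\gamma,n}(\mathcal C)$ by the full-shift periodic-orbit count normalized by $\#{\rm Per}_{\gamma,n}(\sigma)$, then combine Kifer's upper bound with the lower bound of Lemma~\ref{per-lem}. The only difference is that you spell out the verification that $\phi_\gamma$ injects ${\rm Per}_{\gamma,n}(\sigma)$ into ${\rm Per}_n(\sigma_\gamma)$ (via ${\rm Per}_{\gamma,n}(\sigma)\subset A_\gamma\subset B_\gamma$ and Lemma~\ref{include-lem}), a step the paper leaves implicit.
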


\begin{proof}
For any closed subset $\mathcal C$ of $M(\Sigma_\gamma)$, we have 
\[\begin{split}\widetilde\xi_{\gamma,n}(\mathcal C)&=\frac{\#\{\zeta\in  \phi_\gamma({\rm Per}_{\gamma,n}(\sigma )) \colon V_n(\sigma_\gamma,\zeta)\in\mathcal C\}}{\#{\rm Per}_{\gamma,n}(\sigma)}\\
&\leq\frac{\#\{\zeta\in  {\rm Per}_{n}(\sigma_\gamma ) \colon V_n(\sigma_\gamma,\zeta)\in\mathcal C\}}{\#{\rm Per}_{\gamma,n}(\sigma)}.\end{split}\]
Taking logs of both sides, dividing the result by $n$ and letting $n\to\infty$ yields
\[\begin{split}&\limsup_{n\to\infty}\frac{1}{n}\log\widetilde\xi_{\gamma,n}(\mathcal C)
\\
&\leq \limsup_{n\to\infty}\frac{1}{n}\log\#\{\zeta\in  {\rm Per}_{n}(\sigma_\gamma ) \colon V_n(\sigma_\gamma,\zeta)\in\mathcal C\}-\liminf_{n\to\infty}\frac{1}{n}\log\#{\rm Per}_{\gamma,n}(\sigma)\\
&\leq\sup \{h(\xi,\sigma_\gamma)\colon\xi\in M(\Sigma_\gamma,\sigma_\gamma)\cap\mathcal C\}-\log(M+1)\\
&\leq-\inf_{\mathcal C}J_\gamma,\end{split}\]
as required. The last inequality follows from 
 \cite[Theorem~2.1]{Kif94} and
Lemma~\ref{per-lem}.
\end{proof}

\subsection{Structure of the coding map}\label{str-code}
The coding map $\pi\colon\Lambda\to\Sigma_D$ introduced in \eqref{code-def} is not injective. 
In order to clarify where the preimage of $\pi$ is a singleton, we  
consider the set
\[A_{\alpha,\beta}=\left\{\omega\in\Sigma_D\colon \liminf_{i\to\infty}H_i(\omega)=-\infty\ \text{ or } \ \liminf_{i\to-\infty}H_i(\omega)=-\infty\right\}.\]
Note that $A_\alpha\cup A_\beta\subset A_{\alpha,\beta}$. 

 Let $\omega\in\Sigma_D$.
 For each $i\in\mathbb Z$ define 
      \[K_i(\omega)=\begin{cases}
      \bigcap_{j=0}^{i-1}f^{-j}(\Omega_{\omega_j})&\text{ for }  i\geq1,\\
      \bigcap_{j=-i+1}^{0}f^{-j}(\Omega_{\omega_j})&\text{ for } i\leq -1,\\
      [0,1]^3&\text{ for }i=0.\end{cases}\]
 Clearly we have $\pi^{-1}(\omega)\subset\bigcap_{i=-\infty }^\infty K_i(\omega)$.

\begin{lemma}[\cite{STYY} Lemma~3.7]\label{factor-prop}
 If $\omega\in A_{\alpha,\beta}$ then
$\bigcap_{i=-\infty }^\infty K_i(\omega)$ is a singleton.
If moreover $\omega\in\pi(\Lambda)$, then
 $\pi^{-1}(\omega)$ is a singleton.
 \end{lemma}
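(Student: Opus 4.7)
The plan is to estimate the diameter of $K_i(\omega)$ separately in the three coordinate directions and to show that under the hypothesis $\omega\in A_{\alpha,\beta}$ each diameter vanishes along a suitable subsequence.

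The two hyperbolic directions are immediate from the affine structure of $f_{a,b}$. Since $f$ expands $x_u$ at rate at least $\min\{1/a,\,1/(1-Ma)\}>1$ on every $\Omega_\gamma$, a short induction shows that the $x_u$-diameter of $K_i(\omega)$ decays geometrically in $i$ and tends to $0$ as $i\to+\infty$. Symmetrically, $f$ contracts $x_s$ at rate at most $\max\{1-Mb,\,b\}<1$, so the $x_s$-diameter of $K_{-i}(\omega)$ tends to $0$ as $i\to+\infty$. These two bounds alone confine $\bigcap_{i\in\mathbb Z}K_i(\omega)$ to a segment parallel to the $x_c$-axis.

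The heart of the argument is the $x_c$-estimate, which I intend to tie directly to $H_n$. On every $\Omega_{\alpha_k}$ the $x_c$-component of $f$ is an affine contraction of slope $1/M$, while on every $\Omega_{\beta_k}$ it is an affine expansion of slope $M$. Composing along the itinerary $\omega_0\cdots\omega_{n-1}$, the map $x_c\mapsto(f^n x)_c$ is affine of slope $M^{-H_n(\omega)}$, so the constraint $(f^n x)_c\in[0,1]$ forces $\mathrm{diam}_{x_c}K_n(\omega)\leq\min\{1,M^{H_n(\omega)}\}$. The analogous argument on the inverse branches, where each $\alpha_k$ expands and each $\beta_k$ contracts $x_c$, yields $\mathrm{diam}_{x_c}K_{-n}(\omega)\leq\min\{1,M^{H_{-n}(\omega)}\}$. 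Since $\omega\in A_{\alpha,\beta}$ provides either $\liminf_{i\to\infty}H_i(\omega)=-\infty$ or $\liminf_{i\to-\infty}H_i(\omega)=-\infty$, along some subsequence one of these right-hand sides tends to $0$, and hence the $x_c$-diameter of $\bigcap_iK_i(\omega)$ must vanish.

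Combining the three estimates, $\bigcap_{i\in\mathbb Z}K_i(\omega)$ contains at most one point. Non-emptiness follows by nested compactness once every $K_i(\omega)$ is non-empty, which in turn follows from admissibility of $\omega$ together with the density $\overline{\pi(\Lambda)}=\Sigma_D$: for any finite window one may pick an approximant $\omega'\in\pi(\Lambda)$ matching $\omega$ on that window, producing a point of $\Lambda$ inside the required preimage. The second claim is then immediate, since any $x\in\pi^{-1}(\omega)$ already lies in $\bigcap_iK_i(\omega)$, so when $\omega\in\pi(\Lambda)$ the preimage $\pi^{-1}(\omega)$ is a non-empty subset of a singleton. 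The main obstacle I foresee is the careful bookkeeping needed to establish the slope identity $M^{-H_n(\omega)}$ with consistent sign conventions on both forward and backward branches; everything else reduces to elementary geometric estimates on nested compact sets.
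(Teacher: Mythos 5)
The paper does not actually prove this lemma; it imports it verbatim from \cite{STYY} (Lemma~3.7), so there is no in-paper argument to compare against. Your mechanism is certainly the intended one: the $x_u$- and $x_s$-diameters of $K_i(\omega)$ die off by uniform hyperbolicity of those two coordinates, and the $x_c$-diameter is controlled by the slope identity $M^{-H_n(\omega)}$ for the $n$-fold composition along the itinerary, so that $\mathrm{diam}_{x_c}K_n(\omega)\leq\min\{1,M^{H_n(\omega)}\}$ and likewise for $K_{-n}$; the hypothesis $\omega\in A_{\alpha,\beta}$ then kills the center diameter along a subsequence. This correctly yields that $\bigcap_i K_i(\omega)$ has at most one point, and your deduction of the second claim from the first (a non-empty subset of an at-most-one-point set) is fine and is all that the rest of the paper actually uses.

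The genuine gap is the non-emptiness step. You invoke ``nested compactness,'' but the sets $K_i(\omega)$ are not compact: the $\Omega_\gamma$ are products of half-open intervals (e.g.\ $\Omega_{\alpha_k}=[(k-1)a,ka)\times[0,1]^2$), so each $K_i(\omega)$ is a finite intersection of preimages of non-closed boxes. A nested family of non-empty, non-closed sets with diameters tending to $0$ can have empty intersection (compare $\bigcap_n[1-\tfrac1n,1)=\emptyset$), and this is not a hypothetical worry here: for the admissible itinerary $\omega_0=\alpha_1$, $\omega_j=\beta_1$ for $j\geq1$ (which lies in $A_{\alpha,\beta}$ since $H_i\to-\infty$), the $x_u$-constraints force $x_u\in[0,a)$ with $x_u/a\in[e_m,1]$ where $e_m\uparrow1$, so the literal infinite intersection is empty. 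To repair this you must either work with the closures $\overline{K_i(\omega)}$ (for which nested compactness does give a single limit point, and which is presumably how \cite{STYY} sets things up) and then address whether that point belongs to each half-open $K_i(\omega)$, or restrict the first assertion to what is needed downstream, namely the ``at most one point'' statement combined with $\omega\in\pi(\Lambda)$. A secondary, smaller point: for $i\leq-1$ the set $K_i(\omega)$ is a bare intersection of forward images $f^{|j|}(\Omega_{\omega_j})$, which does not by itself certify a single backward orbit realizing the whole itinerary; your contraction estimate for $\mathrm{diam}_{x_c}K_{-n}$ and $\mathrm{diam}_{x_s}K_{-n}$ implicitly assumes such a chain, and the bookkeeping you defer should include verifying (via the near-disjointness of the branch images) that the chain is forced.
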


\subsection{Proof of Theorem~\ref{theoremb}}\label{pfthmb}
For each $\gamma\in\{\alpha,\beta\}$ and $n\in\mathbb N$, define
\[\nu_{\gamma,n}=\frac{\sum_{\omega\in {\rm Per}_{\gamma,n}(\sigma )}\delta_\omega}{\#{\rm Per}_{\gamma,n}(\sigma ) }\in M(\Sigma_D,\sigma)        \ \text{ and } \      \xi_{\gamma,n}=\frac{\sum_{
\zeta\in\phi_\gamma( {\rm Per}_{\gamma,n}(\sigma ))}\delta_\zeta}{\#{\rm Per}_{\gamma,n}(\sigma ) }\in M(\Sigma_\gamma,\sigma_\gamma).\]
Note that the first (resp. second) convergence in Theorem~\ref{theoremb} is equivalent to the convergence of $\{\nu_{\alpha,n}\}$ to $\nu_\alpha$ (resp. 
$\{\nu_{\beta,n}\}$ to $\nu_\beta$) in the weak* topology on $M(\Sigma_D)$.

We define a continuous map
$\Pi_\gamma\colon M(M(\Sigma_\gamma))\to M(\Sigma_\gamma)$ as follows.
Let $\widetilde\xi\in M(M(\Sigma_\gamma))$.
Consider the positive normalized bounded linear functional on $C(\Sigma_\gamma)$ given by
\[\phi\in C(\Sigma_\gamma)\mapsto\int_{M(\Sigma_\gamma)} \left(\int\phi{\rm d\xi}\right){\rm d}\widetilde\xi(\xi).\]
In view of Riesz's representation theorem, we define $\Pi_\gamma(\widetilde\xi)$ to be the unique element of $M(\Sigma_\gamma)$ such that
\[\int\phi{\rm d}\Pi_\gamma(\widetilde\xi)=\int_{M(\Sigma_\gamma)} \left(\int\phi{\rm d\xi}\right){\rm d}\widetilde\xi(\xi)\ \text{ for any $\phi\in C(\Sigma_\gamma)$.}\]
Clearly $\Pi_\gamma$ is continuous, satisfies 
$\Pi(\widetilde\xi_{\gamma,n})=\xi_{\gamma,n}$
and $\Pi_\gamma(\delta_\xi)=\xi$ for any $\xi\in M(\Sigma_\gamma)$ where $\delta_\xi$ denotes the unit point mass at $\xi$.

From Lemma~\ref{LDP-rest}
it follows that
 $\{\widetilde\xi_{\gamma,n}\}$ converges to $\delta_{\xi_\gamma}$. Since $\Pi_\gamma$ is continuous, $\{\xi_{\gamma,n}\}$ converges to $\xi_\gamma$ in the weak* topology on $M(\Sigma_\gamma)$. By the lemma below and $\xi_{\gamma,n}(K_\gamma)=1=\xi_{\gamma}(K_\gamma)$,
 $\{\xi_{\gamma,n}\}$ converges to $\xi_\gamma$  in the weak* topology on $M(K_\gamma)$.
 \begin{lemma}\label{top-lem}
 Let $\gamma\in\{\alpha,\beta\}$. The weak* topology on $M(K_\gamma)$ coincides with the relative topology inherited from the weak* topology on $M(\Sigma_\gamma)$. \end{lemma}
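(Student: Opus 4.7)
The plan is to identify $M(K_\gamma)$ with the subset $\{\nu\in M(\Sigma_\gamma)\colon \nu(K_\gamma)=1\}$ of $M(\Sigma_\gamma)$ via the canonical extension $\iota\colon\mu\mapsto\tilde\mu$ defined by $\tilde\mu(A)=\mu(A\cap K_\gamma)$ for Borel $A\subset\Sigma_\gamma$, and to show that $\iota$ is a topological embedding. Continuity of $\iota$ is immediate: for any $\phi\in C(\Sigma_\gamma)$ the restriction $\phi|_{K_\gamma}$ lies in $C_b(K_\gamma)$, and $\int\phi\,{\rm d}\iota(\mu)=\int\phi|_{K_\gamma}\,{\rm d}\mu$, so weak* convergence in $M(K_\gamma)$ pushes forward to weak* convergence in $M(\Sigma_\gamma)$.

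The heart of the argument lies in the converse direction, and it hinges on Lemma~\ref{dense-lem}: $K_\gamma$ is dense in the compact metric space $\Sigma_\gamma$. Equipping $K_\gamma$ with the subspace metric inherited from $\Sigma_\gamma$, its completion coincides with $\Sigma_\gamma$ (compact implies complete, plus density), so every bounded uniformly continuous function $\psi\colon K_\gamma\to\mathbb R$ extends uniquely to a (uniformly) continuous function $\tilde\psi\in C(\Sigma_\gamma)$. Assuming $\iota(\mu_n)\to\iota(\mu)$ in $M(\Sigma_\gamma)$ and applying weak* convergence to such an extension gives
\[
\int\psi\,{\rm d}\mu_n=\int\tilde\psi\,{\rm d}\iota(\mu_n)\longrightarrow\int\tilde\psi\,{\rm d}\iota(\mu)=\int\psi\,{\rm d}\mu.
\]

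To conclude I would invoke the Portmanteau theorem for the separable metric space $K_\gamma$: weak* convergence of Borel probability measures is characterized by convergence of integrals against bounded uniformly continuous test functions, via the standard approximation of indicators of closed sets by Lipschitz truncations $\max(0,1-d(\cdot,F)/\epsilon)$. Hence $\mu_n\to\mu$ in $M(K_\gamma)$, showing that $\iota$ is a homeomorphism onto its image and completing the proof. The only structural input beyond these generalities is the density of $K_\gamma$ in $\Sigma_\gamma$ from Lemma~\ref{dense-lem}; without it, bounded uniformly continuous functions on $K_\gamma$ need not extend continuously to $\Sigma_\gamma$, and the two topologies would genuinely differ. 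Given density, no step poses a serious difficulty.
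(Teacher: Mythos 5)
Your proposal is correct and follows essentially the same route as the paper: both arguments rest on the observation that, by the density of $K_\gamma$ in $\Sigma_\gamma$ (Lemma~\ref{dense-lem}), restriction gives a bijection between $C(\Sigma_\gamma)$ and the bounded uniformly continuous functions on $K_\gamma$, and that the latter class of test functions determines the weak* topology on $M(K_\gamma)$. The paper states this last point as the definition of the weak* topology where you invoke the Portmanteau theorem, but the substance is identical.
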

 Define $\psi_{\gamma}^*\colon M(K_\gamma)\to M(\Sigma_D)$ by $\psi_{\gamma}^*(\xi)=\xi\circ\psi_\gamma^{-1}$ for $\xi\in M(K_\gamma)$.
 Then $\psi_\gamma^*$ is continuous, satisfies
$\psi_\gamma^*(\xi_{\gamma,n})=\nu_{\gamma,n}$ and 
$\psi_\gamma^*(\xi_{\gamma})=\nu_{\gamma}$.
Hence, $\{\nu_{\gamma,n}\}$ converges to $\nu_\gamma$ in the weak* topology on $M(\Sigma_D)$ as required in Theorem~\ref{theoremb}.
The last convergence in Theorem~\ref{theoremb} follows from the first two and $\#{\rm Per}_{\alpha,n}(\sigma)=\#{\rm Per}_{\beta,n}(\sigma )$ for all $n\in\mathbb N$.

It is left to prove Lemma~\ref{top-lem}.
Let $C_u(K_\gamma)$ denote the set of real-valued, bounded  uniformly continuous functions on $K_\gamma$.
Recall that the weak* topology of $M(K_\gamma)$ is the coarsest topology that makes the function $\xi\in M(K_\gamma)\mapsto\int \phi{\rm d}\xi$ continuous for any $\phi\in C_u(K_\gamma)$.
The restriction of any element of
$C(\Sigma_\gamma)$ to $K_\gamma$ defines an element of $C_u(K_\gamma)$.
Since $K_\gamma$ is dense in $\Sigma_\gamma$
by Lemma~\ref{dense-lem}, any element of $C_u(K_\gamma)$ 
can be extended uniquely to an element of $C(\Sigma_\gamma)$. 
 It follows that
 $\phi\in C(\Sigma_\gamma)\mapsto \phi|_{K_\gamma}\in C_u(K_\gamma)$ is a bijection. Hence the assertion of Lemma~\ref{top-lem} holds.
\qed

\subsection{Proof of Theorem~\ref{theorema}}\label{pfthma}
Let $a,b\in(0,\frac{1}{M})$ and write $f=f_{a,b}$.
For each $n\in\mathbb N$, define
\[
\mu_{\gamma,n}=\frac{\sum_{x\in {\rm Per}_{\gamma,n}(f)}\delta_x}{\#{\rm Per}_{\gamma,n}(f) }\in M([0,1]^3,f).
\]
Note that the first (resp. second) convergence in Theorem~\ref{theorema} is equivalent to the convergence of $\{\mu_{\alpha,n}\}$ to $\mu_\alpha$ (resp. 
$\{\mu_{\beta,n}\}$ to $\mu_\beta$) in the weak* topology on $M([0,1]^3)$.

By Theorem~\ref{theoremb},
$\{\nu_{\gamma,n}\}$ converges to $\nu_\gamma$
in the weak* topology on $M(\Sigma_\gamma)$.
 We have $\nu_{\gamma,n}(A_\gamma)=1$.
From Birkhoff's ergodic theorem and Lemma~\ref{trichotomy}, we have $\nu_{\gamma}(A_\gamma)=1$.
Since $A_\gamma$ is a dense subset of $\Sigma_D$, any bounded uniformly continuous real-valued function on $A_\gamma$ can be extended uniquely to a continuous function on $\Sigma_D$. So,
$\{\nu_{\gamma,n}\}$ converges to $\nu_\gamma$ in the weak* topology on $A_\gamma$.

Put $M'(A_\gamma)=\{\nu\in M(A_\gamma)\colon \nu(\pi(\Lambda))=1\}$.
We have $\nu_{\gamma}\in M'(A_\gamma)$ and $\nu_{\gamma,n}\in M'(A_\gamma)$ for all $n\in\mathbb N$.
Lemma~\ref{factor-prop} allows us to define a continuous map $\rho\colon A_{\alpha,\beta}\cap\pi(\Lambda)\to [0,1]^3$ by $\rho(\omega)\in\pi^{-1}(\omega)$.
Since $\nu\in M'(A_\gamma)\mapsto \nu\circ\rho^{-1}=\nu\circ\pi\in M([0,1]^3)$ is continuous,
$\mu_{\gamma,n}=\nu_{\gamma,n}\circ\pi$ and
$\mu_\gamma=\nu_\gamma\circ\pi$,
it follows that
$\{\mu_{\gamma,n}\}$ converges to $\mu_\gamma$ as required in Theorem~\ref{theorema}.
 The last convergence in Theorem~\ref{theorema} follows from the first two and the equalities $\#{\rm Per}_{\alpha,n}(f)=\#{\rm Per}_{\alpha,n}(\sigma)=\#{\rm Per}_{\beta,n}(\sigma )=\#{\rm Per}_{\beta,n}(f )$ for all $n\in\mathbb N$.\qed

\subsection*{Acknowledgments}
This research was supported by the JSPS KAKENHI 23K20220.

      \bibliographystyle{amsplain}

\end{document}